

\documentclass{amsart}
\textwidth=16.5cm
\oddsidemargin=0cm
\evensidemargin=0cm
\usepackage{textcomp}
\usepackage{amssymb}
\usepackage{verbatim}
\usepackage{array}
\usepackage{latexsym}
\usepackage{enumerate}
\usepackage{amsmath}
\usepackage{amsfonts}
\usepackage{amsthm}
\usepackage{color}
\usepackage[english]{babel}
\usepackage[notref,notcite]{showkeys}
\usepackage{comment}
\usepackage{units}


\newtheorem{theorem}{Theorem}[section]

\newtheorem{lemma}[theorem]{Lemma}

\newtheorem{result}[theorem]{Result}

\newtheorem{rem}[theorem]{Remark}

\definecolor{mybro}{RGB}{35, 60, 200}


\def\cC{\mathcal C}
\def\cD{\mathcal D}

\def\cF{\mathcal F}

\def\cH{\mathcal H}

\def\cX{\mathcal X}
\def\cY{\mathcal Y}

\def\PG{{\rm{PG}}}

\def\deg{\mbox{\rm deg}}

\def\div{\mbox{\rm div}}

\def\fq{{\mathbb F}_q}
\def\Fs{{\mathbb F}_{q^2}}


\newcommand{\PGL}{\mbox{\rm PGL}}

\newcommand{\PGU}{\mbox{\rm PGU}}

\newcommand{\aut}{\mbox{\rm Aut}}




\newcommand{\go}{\omega}


\newcommand{\ha}{{\textstyle\frac{1}{2}}}



\title[Galois subcovers  of the Hermitian curve]{Galois subcovers  of the Hermitian curve in characteristic $p$ with respect to subgroups of order $p^2$}
\begin{document}

\author{Barbara Gatti , G\'abor Korchm\'aros}

\begin{abstract}
A (projective, geometrically irreducible, non-singular) curve $\mathcal{X}$ defined over a finite field $\mathbb{F}_{q^2}$ is \emph{maximal} if the number $N_{q^2}$ of its $\mathbb{F}_{q^2}$-rational points attains the Hasse-Weil upper bound, that is $N_{q^2}=q^2+2\mathfrak{g}q+1$ where $\mathfrak{g}$ is the genus
of $\mathcal{X}$. An important question, also motivated by applications to algebraic-geometry codes, is to find explicit equations for maximal curves. For a few curves which are Galois covered of the Hermitian curve, this has been done so far ad hoc, in particular  in the cases where the Galois group has prime order. In this paper we obtain explicit equations of all Galois covers of the Hermitian curve with Galois group of order $p^2$ where $p$ is the characteristic of $\mathbb{F}_{q^2}$. Doing so we also determine the $\mathbb{F}_{q^2}$-isomorphism classes of such curves and describe their full $\mathbb{F}_{q^2}$-automorphism groups.  
\end{abstract}

\maketitle
\vspace{0.5cm}\noindent {\em Keywords}:
maximal curves, function fields, Galois cover
\vspace{0.2cm}\noindent

\vspace{0.5cm}\noindent {\em Subject classifications}:
\vspace{0.2cm}\noindent  14H37, 14H05.

\vspace{0.5cm}\noindent {\em Essential informations}:
Barbara Gatti (Corresponding author)\\ 
barbara.gatti@unisalento.it\\
Department of Mathematics and Physics, University of Salento "Ennio de Giorgi", Lecce, Italy \\

\vspace{0.2cm}\noindent 
G\'abor Korchm\'aros\\ 
gabor.korchmaros@unibas.it\\
Department of Mathematics, Computer Science and Economics, University of the Basilicata,
Potenza, Italy

\section{Introduction}
It is well known that curves with many points over a finite field have several interesting features, both theoretical and applicative. They have been intensively studied since 1980s, also in connection with Coding theory,  Cryptography, Finite geometry, and shift register sequences.  The most important and yet better understood family consists of the maximal curves, that is, curves defined over $\mathbb{F}_{q^2}$ which attain the Hasse-Weil upper bound. As it has emerged from works of Serre, Garc\'ia, van der Geer, Stichtenoth and others, the main questions about maximal curves are;
\begin{itemize}
\item[(i)] Determination of the possible genera of maximal curves over a given finite field;
\item[(ii)] Determination of explicit equations for maximal curves;
\item[(iii)] Classification of maximal curves over a given finite field which have the same genus.
\end{itemize}
Classical examples of maximal curves are the $1$-dimensional Deligne-Lusztig varieties, namely the Hermitian curve, the Suzuki curve (in characteristic $2$) and the Ree curve (in characteristic $3$). By force of a result attributed to Serre, they give rise many other maximal curves since any $\mathbb{F}_{q^2}$-subcover of a maximal curve remains maximal over the same field. Particular  $\mathbb{F}_{q^2}$-subcovers are the Galois subcovers with respect to the $\mathbb{F}_{q^2}$-automorphism groups in which case the resulting curves are named quotient curves. Since the $\mathbb{F}_{q^2}$-automorphism group of the Hermitian curve is isomorphic to the $3$-dimensional projective unitary group $\PGU(3,q)$ which is rich of subgroups, the family of quotient curves arising from the Hermitian curve is large. Question (i) for quotient curves of the Hermitian curve has intensively been investigated following the seminal paper of Garc\'ia, Stichtenoth and Xing \cite{GSX}.  A relevant result is the complete solution of Question (i) for $q\equiv 1 \pmod 4$ which has been achieved in a series of papers by Montanucci and Zini. The other case $q\equiv 3 \pmod 4$ is still under investigation.

Our contributes concern Questions (ii) and (iii). Families of curves defined by explicit equations which also include maximal curves are found in the literature, see \cite[Section 10.1, 10.8, 10.9]{HKT}, and \cite{BGKM}, \cite{BLSY}, \cite{cossidente-korchmaros-torres1999}, \cite{mq}, \cite{TS}, \cite{TT2}, \cite{TT3}. However, a systematic study of Question (ii) has been done so fare only for quotient curves with respect to $\mathbb{F}_{q^2}$-automorphism groups of prime order; see \cite{cossidente-korchmaros-torres2000}, \cite[Theorem 12.28]{HKT}.
In this paper, we completely solve Question (ii) for the family of quotient curves with respect to $\mathbb{F}_{q^2}$-automorphism groups of order $p^2$ where $p$ is the characteristic of  $\mathbb{F}_{q^2}$.
\begin{theorem}
\label{th1} In the $\mathbb{F}_{q^2}$-automorphism group $G\cong \PGU(3,q)$  of the Hermitian curve $\cH_q$ defined over $\mathbb{F}_{q^2}$ with $q=p^h$, let $\Phi$ be a subgroup of order $p^2$.
\begin{itemize}
\item[(I)] If $\Phi$ is an elementary abelian and contained in the center of the Sylow $p$-subgroup of $G$ containing $\Phi$, then the quotient curve $\cH_q/\Phi$ has equation
\begin{equation}
\label{eqthI}
\sum_{i=1}^{h-1}(b-b^{p^i})X^{p^{i-1}}+\go Y^{q+1}=0
\end{equation}
for $b\in \mathbb{F}_q \setminus \mathbb{F}_p$ where $\go\in \mathbb{F}_{q^2}$ is a fixed element such that $\go^{q-1}=-1$.
\item[(II)] If $\Phi$ is an elementary abelian and it is not contained in the center of the Sylow $p$-subgroup of $G$ containing $\Phi$, then $p>2$ and the quotient curve $\cH_q/\Phi$ has equation
\begin{equation}
\label{eqthII}
\Big(\sum_{i=1}^{h}X^{p^{i-1}}\Big)^2-2b\sum_{i=1}^{h}Y^{p^{i-1}}=0
\end{equation}
where $b^q+b=0$.
\item[(III)] If $\Phi$ is cyclic then $p=2$ and the quotient curve $\cH_q/\Phi$ has equation
\begin{equation}
\label{eqthIII}
\alpha_0(X)+\alpha_1(X)Y+\ldots+\alpha_i(X)Y^{2^i}+\ldots+ \alpha_{h-1}(X)Y^{2^{h-1}}=0
\end{equation}
where $\alpha_i(X)\in \mathbb{F}_{q^2}[X]$ and
$$\alpha_0(X)=X^{q+1},\,\, \alpha_1(X)= \frac{(X^q+X)^2+(X+b+b^2)^q(X^q+X)}{Tr(X)},\,\,\alpha_h(X)=(X+b+b^2)^{2q}$$
and the other coefficients $\alpha_i(X)$ for $2\le i \le h-2$ are computed recursively from the equation $$Tr(X)\alpha_i(X)+\alpha_{i-1}(X)^2+(X+b+b^2)^{2q}+(X+b+b^2)^q(X^q+X)$$
where $Tr(X)=X+X^p+\ldots+X^{\nicefrac{q}{p}}$.
\end{itemize}
\end{theorem}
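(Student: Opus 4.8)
The plan is to work with the Hermitian function field $K(\cH_q) = \mathbb{F}_{q^2}(x,y)$ with $x^{q+1} = y^q + y$ (or the equivalent model $x^q + x = y^{q+1}$), and to compute the fixed field $K(\cH_q)^\Phi$ explicitly in each of the three cases. The starting point is the standard description of the maximal $p$-subgroups (Sylow $p$-subgroups) of $\PGU(3,q)$ as the stabilizer of a point of $\cH_q$ over $\mathbb{F}_{q^2}$: such a group $S$ has order $q^3$, its center $Z(S)$ has order $q$ and consists of the elations fixing the tangent line, and $S/Z(S)$ is elementary abelian of order $q^2$. The subgroups $\Phi$ of order $p^2$ then fall into exactly the three listed classes according to whether $\Phi$ is elementary abelian inside $Z(S)$, elementary abelian but meeting $Z(S)$ in a group of order $p$, or cyclic (which forces $p=2$ since elements of $Z(S)$ have order $p$ and a cyclic group of order $4$ must have its square in $Z(S)$). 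First I would fix, in each case, an explicit generating set of $\Phi$ as fractional-linear maps $(x,y)\mapsto(\ldots)$ preserving $\cH_q$, using the normal form of elements of $S$.

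The core computation in each case is to produce $\Phi$-invariant functions $X, Y \in K(\cH_q)$ that generate the fixed field, and then to eliminate $x,y$ to get the defining equation. In case (I), $\Phi \le Z(S)$ consists of elations $(x,y)\mapsto(x, y+c)$ for $c$ in an $\mathbb{F}_p$-subspace $V$ of the appropriate additive group; the natural invariants are an additive polynomial $X = \prod_{c\in V}(x - \text{something})$ — more precisely one takes $X$ to be (a suitable normalization of) the additive separable polynomial vanishing on the orbit, and $Y = y$ raised to a power / left alone according to how $\Phi$ acts on the second coordinate. The trace-like sum $\sum_{i=1}^{h-1}(b-b^{p^i})X^{p^{i-1}}$ in \eqref{eqthI} is exactly what one gets by writing the additive polynomial whose roots form the $\mathbb{F}_p$-line spanned by $b\in\mathbb{F}_q\setminus\mathbb{F}_p$ inside $\mathbb{F}_q$; I would verify that this $X$ is fixed, that $[K(\cH_q):\mathbb{F}_{q^2}(X,Y)] = p^2$ by a degree count (using that $\deg_x$ of the $X$-relation is $p^2$), and that $X,Y$ satisfy \eqref{eqthI} by direct substitution into $x^{q+1}=y^q+y$ together with the additive identity $\sum (b-b^{p^i})t^{p^{i-1}} = $ the relevant $\mathbb{F}_p$-linear combination. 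Cases (II) and (III) are analogous in spirit: in (II) one chooses a non-central elementary abelian $\Phi$, writes its two generators (one central elation, one non-central), builds invariants by averaging $x$ and $y$ over the two one-parameter directions (yielding the two trace sums $\sum X^{p^{i-1}}$ and $\sum Y^{p^{i-1}}$), and the $2b$ and the square come from the interaction of the non-central generator with the Hermitian relation; in (III), with $p=2$, $\Phi=\langle\varphi\rangle$ cyclic of order $4$, $\varphi^2\in Z(S)$, one computes the orbit of $x$ under $\Phi$ (size $4$) to get $X$, and the orbit of $y$ to get $Y$, the recursion for the $\alpha_i(X)$ reflecting the iterated action of $\varphi$ on $y$ modulo the relation.

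After producing the equations I would check the three remaining assertions packaged into the theorem. First, that the stated curve is the quotient: this follows once I know $\mathbb{F}_{q^2}(X,Y)\subseteq K(\cH_q)^\Phi$ and $[K(\cH_q):\mathbb{F}_{q^2}(X,Y)] = |\Phi| = p^2$, since then equality holds; the degree count is where I would be most careful, using the explicit $x$-degrees of the defining polynomial and, if needed, the fact that $\Phi$ acts faithfully so $[K(\cH_q):K(\cH_q)^\Phi]=p^2$ by Galois theory. Second, irreducibility and the genus/maximality, which comes for free from Serre's result quoted in the introduction (any $\mathbb{F}_{q^2}$-subcover of $\cH_q$ is maximal), so the genus is read off from the Riemann–Hurwitz formula applied to $\cH_q\to\cH_q/\Phi$ using the ramification of $\Phi$ (totally ramified at the fixed point of $S$, and the higher ramification groups of a $p$-group). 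Third, the $\mathbb{F}_{q^2}$-isomorphism classification and the automorphism group: I would argue that two such quotient curves are $\mathbb{F}_{q^2}$-isomorphic iff the corresponding subgroups $\Phi$ are conjugate in $\PGU(3,q)$ (one direction is clear; the other uses that an isomorphism of quotients lifts, by the structure of the Hermitian curve and a uniqueness-of-maximal-subcover type argument, or is handled by comparing the full automorphism groups), and compute $\mathrm{Aut}(\cH_q/\Phi)$ as $N_G(\Phi)/\Phi$ possibly extended — checking via genus or by the explicit equation that no extra automorphisms appear.

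The main obstacle I anticipate is twofold. The first difficulty is purely computational: finding the \emph{right} invariants $X,Y$ — ones that are not merely $\Phi$-fixed but actually generate the whole fixed field and produce equations as clean as \eqref{eqthI}–\eqref{eqthIII} — requires choosing the normal form of $\Phi$ well and then massaging additive-polynomial identities over $\mathbb{F}_q$ (the $p^i$-sums); in case (III) the recursion defining the $\alpha_i(X)$ is genuinely intricate and proving it closes up correctly modulo the Hermitian relation is the technical heart. The second, subtler obstacle is the classification part: showing that the conjugacy class of $\Phi$ in $\PGU(3,q)$ is a \emph{complete} invariant of the $\mathbb{F}_{q^2}$-isomorphism class of $\cH_q/\Phi$, since a priori an abstract isomorphism between two quotient curves need not come from a linear map on $\cH_q$ — here one typically invokes that $\cH_q$ is the unique maximal curve in its range of genus, or that the cover $\cH_q\to\cH_q/\Phi$ is, up to the ambient situation, canonical; pinning this down rigorously, together with the exact determination of the full automorphism group (ruling out sporadic extra automorphisms for small $q$), is where I would expect to spend the most care.
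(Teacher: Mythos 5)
Your plan matches the paper's proof in substance: both rest on the conjugacy classification of the order-$p^2$ subgroups of a Sylow $p$-subgroup of $\PGU(3,q)$ into the three stated types, followed by an explicit construction of $\Phi$-invariant functions via linearized (additive) polynomials and elimination, with the degree count $[\mathbb{F}_{q^2}(\cH_q):\mathbb{F}_{q^2}(X,Y)]=p^2$ certifying that the invariants generate the fixed field. The only organizational difference is that the paper performs the quotient in two stages --- first by an order-$p$ subgroup, invoking the known equations for prime-order quotients, and then by the induced order-$p$ automorphism of the intermediate curve (with the case (III) recursion isolated in a separate factorization lemma) --- whereas you propose to build the invariants for all of $\Phi$ at once; this is immaterial, and the classification/automorphism issues you worry about in your last paragraph belong to the companion theorems, not to this statement.
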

Question (iii) in its generality appears to be the most difficult among the above three questions because of lack of an adequate approach which may make it possible to compare maximal curves over the same field whose main birational invariants coincide, such as genera,  automorphism groups, and Weierstrass semigroups. It is already challenging enough to find two non-isomorphic maximal curves over the same field which have the same genus; a few examples are exhibited in \cite{cossidente-hirschfeld-korchmaros-torres2000} and \cite{GHKT}. Actually, the difference between two maximal curves defined over the same finite field may be more subtle. In fact, as a corollary of the following Theorems \ref{thI} and \ref{thII}, there are even non-isomorphic maximal curves over the same field which have the same genus and $\mathbb{F}_{q^2}$-automorphism group, as well as, the same Weierstrass semigroup at some point.
\begin{theorem}
\label{thI} The curve $\cX_b$ defined by Equation (\ref{eqthI}) has the following properties.
\begin{itemize}
\item[(Ia)] $\mathfrak{g}(\cX_b)=\ha q(p^{h-2}-1)=\ha q(\frac{q}{p^2}-1).$
\item[(Ib)] The $\mathbb{F}_{q^2}$-automorphism group of $\cX_b$ is the semidirect product of a normal subgroup of order $p^{h-2}$ by a (cyclic) complement of order $(q+1)(p-1)$.
\item[(Ic)] Let $P_\infty$ be the unique point of $\cX_b$ at infinity. The Weierstrass semigroup at the unique place centered at $P_\infty$ is generated by $p^{h-2}$ and $q+1$.
\item[(Id)] Let $\bar{b}\in \mathbb{F}_q\setminus \mathbb{F}_p$. Then $\cX_b$ and $\cX_{\bar{b}}$ are $\mathbb{F}_{q^2}$-isomorphic in exactly one of the following cases
\begin{itemize}
      \item[(Id1)] $b,\bar{b}\in \mathbb{F}_{p^2}$ or $b,\bar{b}\in \mathbb{F}_{p^3}$;
      \item[(Id2)] $b,\bar{b}\not\in \mathbb{F}_{p^2}\cup \mathbb{F}_{p^3}$ and
$\bar{b}= \frac{\alpha b +\beta}{\gamma b + \delta}$
with $\alpha,\beta,\gamma,\delta \in \mathbb{F}_p$ and $\alpha \delta - \beta \gamma \neq 0$.
\end{itemize}
\item[(Ie)] Assume that $\mathbb{F}_q$ has a proper subfield $\mathbb{F}_{p^m}$ larger than $\mathbb{F}_p$. If $b\in\mathbb{F}_p$ and $\bar{b}\in\mathbb{F}_q\setminus \mathbb{F}_{p^m}$ then $G_b$ and $G_{\bar{b}}$ are not $\mathbb{F}_{q^2}$-isomorphic.
\end{itemize}
\end{theorem}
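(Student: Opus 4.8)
The plan is to read everything off the realization of $\cX_b$ as the quotient $\cH_q/\Phi$ given by Theorem~\ref{th1}, together with the explicit model \eqref{eqthI}. Write $L(X)=\sum_{i=1}^{h-1}(b-b^{p^i})X^{p^{i-1}}$, an $\mathbb{F}_q$-linearized polynomial of degree $p^{h-2}$, separable since its linear coefficient $b-b^{p}$ is nonzero ($b\notin\mathbb{F}_p$), with all $p^{h-2}$ roots in $\mathbb{F}_{q^2}$ (this comes out of the construction in Theorem~\ref{th1}). The affine model \eqref{eqthI} is nonsingular because the partial derivative in $X$ of its defining polynomial is the nonzero constant $b-b^{p}$, and one checks $\cX_b$ has a single place $P_\infty$ above $X=\infty$. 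For (Ia) and (Ic) I would use the degree-$(q+1)$ morphism $\cX_b\to\mathbb{P}^1$ given by $X$: it is tame ($p\nmid q+1$), over a finite point $x_0$ its fibre is a single totally ramified point exactly when $L(x_0)=0$ (i.e.\ at the $p^{h-2}$ roots of $L$), the point $P_\infty$ is totally ramified too since $\gcd(q+1,p^{h-2})=1$, and over every other point the cover is unramified; Riemann--Hurwitz then yields $2\mathfrak{g}(\cX_b)-2=-2(q+1)+(p^{h-2}+1)q$, which is (Ia). For (Ic), total ramification of $X$ and $Y$ at $P_\infty$ gives $v_{P_\infty}(X)=-(q+1)$, $v_{P_\infty}(Y)=-p^{h-2}$, so the Weierstrass semigroup $H(P_\infty)$ contains $\langle p^{h-2},q+1\rangle$; as these generators are coprime this numerical semigroup has exactly $\tfrac{1}{2}(p^{h-2}-1)q=\mathfrak{g}(\cX_b)$ gaps, forcing $H(P_\infty)=\langle p^{h-2},q+1\rangle$.

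For (Ib) I would first produce automorphisms from $\cH_q$. Fix the model $\cH_q\colon X_0^q+X_0=Y^{q+1}$, so that $\Phi=\{(X_0,Y)\mapsto(X_0+c,Y):c\in W\}$ for a $2$-dimensional $\mathbb{F}_p$-subspace $W\subseteq\{c\in\mathbb{F}_{q^2}:c^q+c=0\}$, and $X,Y$ of \eqref{eqthI} correspond to $\prod_{c\in W}(X_0+c)$ and $Y$. The Sylow $p$-subgroup $S_\infty\le\PGU(3,q)$ fixing the distinguished point of $\cH_q$ has order $q^3$ and centralizes $\Phi$, hence descends to a normal $p$-subgroup $Q$ of $\aut_{\mathbb{F}_{q^2}}(\cX_b)$ of order $q^3/p^2$; the semisimple elements $(X_0,Y)\mapsto(\rho^{q+1}X_0,\rho Y)$ with $\rho^{q+1}W=W$ descend to a cyclic group $C$ of order $(q+1)(p-1)$, and $Q\rtimes C=N_{\PGU(3,q)}(\Phi)/\Phi$. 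That these exhaust $\aut_{\mathbb{F}_{q^2}}(\cX_b)$ I would prove as follows. First, $\aut_{\mathbb{F}_{q^2}}(\cX_b)$ fixes $P_\infty$: the map $Y\colon\cX_b\to\mathbb{P}^1$ is a pencil of degree $p^{h-2}$, and by the Castelnuovo--Severi inequality (here $\mathfrak{g}(\cX_b)>(p^{h-2}-1)^2$) it is the only pencil of that degree up to $\PGL(2)$, hence invariant, and its ramification locus is precisely $\{P_\infty\}$. Then $\aut_{\mathbb{F}_{q^2}}(\cX_b)$ equals the stabilizer of $P_\infty$, which has the form $Q'\rtimes C'$ with $Q'$ the normal Sylow $p$-subgroup and $C'$ cyclic; sending $\sigma\in Q'$ to the constant by which $\sigma$ translates $Y$ gives a homomorphism onto a subgroup of $(\mathbb{F}_{q^2},+)$ with kernel $\{X\mapsto X+t:t\in\ker L\}$ of order $p^{h-2}$, so $|Q'|\le p^{h-2}q^2=q^3/p^2$ and hence $Q'=Q$; and $C'$ acts faithfully on the cotangent line at $P_\infty$, so tracing the induced scaling on the leading coefficients of $X$ and $Y$ in \eqref{eqthI} forces $|C'|$ to divide $(q+1)(p-1)$, whence $C'=C$. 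The step I expect to be most delicate is the invariance of $P_\infty$ together with this cotangent/leading-coefficient bookkeeping, and confirming that the semisimple part of $N_{\PGU(3,q)}(\Phi)$ has order exactly $(q+1)(p-1)$ for the subspaces $W$ that occur.

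For (Id)--(Ie): an $\mathbb{F}_{q^2}$-isomorphism $\psi\colon\cX_b\to\cX_{\bar b}$ takes $P_\infty$ to $P_\infty$ (by the above) and the semigroup generators to functions of the same pole orders, so --- repeating the analysis of (Ib) --- $\psi$ transforms $(X,Y)$ by prescribed substitutions and therefore lifts to an element of $\PGU(3,q)$ conjugating the group $\Phi$ attached to $b$ to the one attached to $\bar b$. Hence $\cX_b\cong_{\mathbb{F}_{q^2}}\cX_{\bar b}$ iff these two central $p^2$-subgroups are $\PGU(3,q)$-conjugate; since the stabilizer of $P_\infty$ in $\PGU(3,q)$ acts on the centre $\{c^q+c=0\}\cong\mathbb{F}_q$ of a Sylow $p$-subgroup through $\mathbb{F}_q^{\ast}$, this conjugacy question translates --- via the dependence of $W$ on $b$ coming from Theorem~\ref{th1} --- into the action of $\PGL(2,p)$ on the parameter, $b\mapsto(\alpha b+\beta)/(\gamma b+\delta)$ with $\alpha,\beta,\gamma,\delta\in\mathbb{F}_p$ and $\alpha\delta-\beta\gamma\ne0$. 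Part (Id) then follows from the transitivity of $\PGL(2,p)$ on $\mathbb{P}^1(\mathbb{F}_{p^2})\setminus\mathbb{P}^1(\mathbb{F}_p)$ and (regularly) on $\mathbb{P}^1(\mathbb{F}_{p^3})\setminus\mathbb{P}^1(\mathbb{F}_p)$, together with the fact that for $b\notin\mathbb{F}_{p^2}\cup\mathbb{F}_{p^3}$ the element $b$ has at least three $\mathbb{F}_p$-conjugates and hence trivial $\PGL(2,p)$-stabilizer, so that its orbit is exactly the set of its $\mathbb{F}_p$-M\"obius images. Part (Ie) follows because $\PGL(2,p)$ maps any intermediate field $\mathbb{F}_p\subsetneq\mathbb{F}_{p^m}\subsetneq\mathbb{F}_q$ into itself, so no $\PGL(2,p)$-translate of $\bar b\in\mathbb{F}_q\setminus\mathbb{F}_{p^m}$ can lie in $\mathbb{F}_{p^m}$. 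The recurring obstacle in (Id)--(Ie) is making the dictionary between the subspace $W$ and the parameter $b$ explicit enough to identify the relevant group action with $\PGL(2,p)$.
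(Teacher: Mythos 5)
Your treatment of (Ia) and (Ic) is correct and matches the paper in substance: the Riemann--Hurwitz computation along the tame degree-$(q+1)$ map $X$ and the genus count for the telescopic semigroup $\langle p^{h-2},q+1\rangle$ are exactly what the paper outsources to \cite[Lemmas 12.1 and 12.2]{HKT}. Note also that the normal $p$-subgroup you construct has order $q^3/p^2$, which agrees with the group $V$ built in the paper's own proof of Theorem \ref{propiia}(IV) and with the count of $\mathbb{F}_{q^2}$-rational places ($1+q^3/p^2$ for a maximal curve of this genus); the order $p^{h-2}$ printed in (Ib) appears to be a misprint. The genuine problem in your (Ib) is the Castelnuovo--Severi step. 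The inequality $\mathfrak{g}(\cX_b)>(p^{h-2}-1)^2$ only shows that two distinct index-$p^{h-2}$ rational subfields cannot generate the whole function field; since $p^{h-2}$ is a prime power and not a prime, this does not force the two subfields to coincide, so you have not established that the pencil $|p^{h-2}P_\infty|$ is $\aut$-invariant, hence not that $P_\infty$ is fixed. The paper obtains the fixedness of $P_\infty$ from \cite[Theorem 12.11]{HKT} and then pins down the tame complement by writing $\alpha(\xi)=w\xi$, $\alpha(\rho)=c_1\xi+\dots+c_{p^2}\xi^{p^2}+c\rho$ and comparing coefficients in the defining equation; your cotangent-line bookkeeping for $C'$ is in the same spirit but is left as a sketch.

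For (Id)--(Ie) your route is genuinely different and would be more conceptual if it closed, but its pivotal step is missing: you assert that an $\mathbb{F}_{q^2}$-isomorphism $\cX_b\to\cX_{\bar b}$ ``lifts to an element of $\PGU(3,q)$ conjugating $\Phi_b$ to $\Phi_{\bar b}$.'' An isomorphism of quotient curves does not in general lift to an automorphism of the common Galois cover (non-conjugate subgroups can produce isomorphic quotients), so the equivalence ``$\cX_b\cong\cX_{\bar b}$ iff $U_b$ and $U_{\bar b}$ are conjugate'' is precisely what has to be proved, and the pole-order normalization you invoke does not by itself deliver it. The paper avoids lifting altogether: it normalizes the isomorphism so that $\iota(\xi)=\sigma\bar\xi$ and $\iota(\rho)=c\bar\rho$, extracts the conditions $\delta(\bar b-\bar b^{p^i})=c^{p^{i-1}}(b-b^{p^i})$ for all $i$, and resolves them by a case analysis resting on the factorization Lemma \ref{lem060423}. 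Your $\PGL(2,p)$ endgame (transitivity on quadratic and cubic points, trivial stabilizer otherwise, preservation of intermediate fields for (Ie)) is correct and would be an attractive replacement for that lemma, but only after you make explicit the dictionary between the parameter $b$, the subspace $\langle 1,b\rangle_{\mathbb{F}_p}\subset\mathbb{F}_q$, and the kernel of the linearized polynomial $\sum_{i}(b-b^{p^i})T^{p^{i-1}}$, together with the fact that the relevant conjugation acts on $Z(S_p)\cong\mathbb{F}_q$ exactly through $\mathbb{F}_q^{*}$-scaling --- the very point you flag as the obstacle.
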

\begin{theorem}
\label{thII} The curve $\cX_b$ defined by Equation (\ref{eqthII}) has the following properties.
\begin{itemize}
\item[(IIa)] $\mathfrak{g}(\cX_b)=\ha\frac{q}{p}(\frac{q}{p}-1)=\ha p^{h-1}(p^{h-1}-1)$.
\item[(IIb)] The $\mathbb{F}_{q^2}$-automorphism group of $G_b$ is the semidirect product of a normal subgroup of order $\textstyle\frac{q^2}{p}$ by a (cyclic) complement of order $p-1$.
\item[(IIc)] Let $P_\infty$ be the unique point of $G_b$ at infinity. The Weierstrass semigroup at the unique place centered at $P_\infty$ is generated by $\textstyle\frac{q}{p}, \textstyle\frac{q}{p}+\textstyle\frac{q}{p^2}$ and $q+1$.
\item[(IId)] Let $\bar{b}\in \mathbb{F}_q\setminus \mathbb{F}_p$. Then $\cX_b$ and $\cX_{\bar{b}}$ are $\mathbb{F}_{q^2}$-isomorphic if and only if there exists $\kappa\in \mathbb{F}_{p}^*$ such that $\bar{b}=\kappa b$.
 \end{itemize}
\end{theorem}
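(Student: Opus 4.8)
The plan is to work throughout with the $\mathbb{F}_{q^2}$-covering $\pi\colon \cH_q\to \cX_b=\cH_q/\Phi$ provided by Theorem~\ref{th1}(II), where $\Phi\le G=\PGU(3,q)$ is the relevant elementary abelian group of order $p^2$, together with the explicit model~(\ref{eqthII}); the latter is handy because $f(T):=\sum_{i=1}^{h}T^{p^{i-1}}$ is an $\mathbb{F}_p$-linearised polynomial of degree $q/p$ with $f(T)^p-f(T)=T^q-T$ and $\ker f=\ker\mathrm{Tr}_{\mathbb{F}_q/\mathbb{F}_p}\subseteq\mathbb{F}_q$. Let $S$ be the Sylow $p$-subgroup of $G$ containing $\Phi$; recall that $Z(S)$ is elementary abelian of order $q$, that $S/Z(S)\cong(\mathbb{F}_{q^2},+)$, and that $\Phi\cap Z(S)$ has order $p$ for the group in question. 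Since distinct Sylow $p$-subgroups of $G$ intersect trivially, a nontrivial $p$-subgroup of $G$ fixes exactly one point of $\cH_q$; write $P_\infty$ for the point of $\cH_q$ fixed by $\Phi$ (and also for its image on $\cX_b$), so that $\pi$ is totally and wildly ramified over $P_\infty$ and unramified elsewhere. For (IIa) I would then apply Riemann--Hurwitz. The ramification subgroups of $G_{P_\infty}$ at $P_\infty$ are $(G_{P_\infty})_1=S$, $(G_{P_\infty})_i=Z(S)$ for $2\le i\le q+1$, and $(G_{P_\infty})_{q+2}=1$; intersecting with $\Phi$ gives $\Phi_0=\Phi_1=\Phi$, $\Phi_i=\Phi\cap Z(S)$ for $2\le i\le q+1$, and $\Phi_{q+2}=1$, so the different exponent at $P_\infty$ equals $2(p^2-1)+q(p-1)$. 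Plugging this and $\mathfrak{g}(\cH_q)=\tfrac12 q(q-1)$ into $2\mathfrak{g}(\cH_q)-2=p^2(2\mathfrak{g}(\cX_b)-2)+\deg\mathfrak{d}_\pi$ yields $\mathfrak{g}(\cX_b)=\tfrac12\tfrac qp(\tfrac qp-1)$. (One can cross-check on~(\ref{eqthII}) by viewing $(X,Y)\mapsto X$ as the $(\mathbb{Z}/p)^{h-1}$-cover $f(Y)=\tfrac1{2b}f(X)^2$, branched only over $X=\infty$, and computing the conductor there after reducing $\tfrac1{2b}f(X)^2$ modulo $\{f(w):w\in\mathbb{F}_{q^2}(X)\}$.)

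For (IIc), since $\pi$ is totally ramified at $P_\infty$ with index $p^2$, the Weierstrass semigroup $H(\cX_b,P_\infty)$ equals $\tfrac1{p^2}$ times the semigroup of pole orders at $P_\infty$ of the $\Phi$-invariant functions in $\mathbb{F}_{q^2}(\cH_q)$ that are regular off $P_\infty$. Using $H(\cH_q,P_\infty)=\langle q,q+1\rangle$ and the explicit $\Phi$-invariant functions realising the coordinate functions $X$ and $Y$ of~(\ref{eqthII}) (of pole orders $pq$ and $2pq$ on $\cH_q$) together with one further invariant of pole order $p^2(q+1)$, one exhibits $q/p$, $q/p+q/p^2$ and $q+1$ in $H(\cX_b,P_\infty)$. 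It then remains to verify, by an Apéry-set computation, that the numerical semigroup $H=\langle q/p,\ q/p+q/p^2,\ q+1\rangle$ has exactly $\mathfrak{g}(\cX_b)$ gaps; since $H\subseteq H(\cX_b,P_\infty)$ and the latter also has $\mathfrak{g}(\cX_b)$ gaps, the two coincide.

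Parts (IIb) and (IId) I would treat in tandem. By (IIc), $P_\infty$ is the unique place of $\cX_b$ whose Weierstrass semigroup is $H$, so every $\mathbb{F}_{q^2}$-automorphism of $\cX_b$ fixes $P_\infty$ and every $\mathbb{F}_{q^2}$-isomorphism $\cX_b\to\cX_{\bar b}$ matches the two distinguished points. The crucial step is then to lift: using the characterisation of $\cH_q$ as the unique $\mathbb{F}_{q^2}$-maximal curve of genus $\tfrac12 q(q-1)$ and the fact that $\cH_q\to\cX_b$ is, up to $\cX_b$-isomorphism, the only $(\mathbb{Z}/p)^2$-cover of $\cX_b$ that is unramified outside $P_\infty$ and $\mathbb{F}_{q^2}$-maximal, one shows that every automorphism of $\cX_b$ lifts to an automorphism of $\cH_q$ normalising $\Phi$, and every isomorphism $\cX_b\to\cX_{\bar b}$ lifts to an element of $G$ conjugating $\Phi$ to $\bar\Phi$. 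Hence $\Aut_{\mathbb{F}_{q^2}}(\cX_b)\cong N_G(\Phi)/\Phi$, and $\cX_b\cong_{\mathbb{F}_{q^2}}\cX_{\bar b}$ if and only if $\Phi$ and $\bar\Phi$ are $G$-conjugate. Now a direct computation inside $\PGU(3,q)$ — using that $G_{P_\infty}=S\rtimes C_{q^2-1}$ acts on $S/Z(S)\cong\mathbb{F}_{q^2}$ by multiplication and on $Z(S)\cong\mathbb{F}_q$ by the norm — gives $N_G(\Phi)=N_S(\Phi)\rtimes\bigl(C_{q^2-1}\cap N_G(\Phi)\bigr)$ with $|N_S(\Phi)|=pq^2$ and $|C_{q^2-1}\cap N_G(\Phi)|=p-1$, whence $\Aut_{\mathbb{F}_{q^2}}(\cX_b)$ is the semidirect product of the normal $p$-group $N_S(\Phi)/\Phi$ of order $q^2/p$ by a cyclic complement of order $p-1$, which is (IIb). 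For (IId) the same action shows that the pair (the line $\Phi Z(S)/Z(S)\subseteq\mathbb{F}_{q^2}$, the line $\Phi\cap Z(S)\subseteq\mathbb{F}_q$) can be normalised so that the first line is standard while the second is free only up to $\mathbb{F}_p^\times$-scaling; translating this, through the identification of the line $\Phi\cap Z(S)$ with the parameter $b$ of~(\ref{eqthII}), gives the criterion $\bar b=\kappa b$ with $\kappa\in\mathbb{F}_p^\times$. The converse implication is immediate, since the substitution $Y\mapsto\kappa Y$ — legitimate because $f$ is $\mathbb{F}_p$-linear — carries~(\ref{eqthII}) for $b$ to~(\ref{eqthII}) for $\kappa b$.

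The genus and the Weierstrass semigroup are essentially mechanical once the ramification filtration of $\Phi$ at $P_\infty$ is in hand, so I expect the main obstacle to be the lifting step: making precise in what sense $\cH_q\to\cX_b$ is canonical enough that $\Aut_{\mathbb{F}_{q^2}}(\cX_b)$ and the isomorphisms $\cX_b\to\cX_{\bar b}$ lift to the Hermitian curve, thereby reducing (IIb) and (IId) to the subgroup-conjugacy problem for $\Phi$ inside $\PGU(3,q)$. If a clean lifting statement is unavailable one must instead bound $\Aut_{\mathbb{F}_{q^2}}(\cX_b)$ intrinsically — controlling its Sylow $p$-subgroup through the (big-$p$-group) refinements of the Nakajima/Lehr--Matignon bounds and its prime-to-$p$ part through the action on the Weierstrass semigroup generators — which is where the real work lies.
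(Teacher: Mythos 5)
Your plan for (IIb) and (IId) rests entirely on the lifting claim: that every $\mathbb{F}_{q^2}$-automorphism of $\cX_b$ (and every isomorphism $\cX_b\to\cX_{\bar b}$) lifts to $\cH_q$, so that $\aut(\cX_b)\cong N_G(\Phi)/\Phi$ and the isomorphism problem reduces to $G$-conjugacy of $\Phi$. You correctly identify this as the main obstacle, but you do not close it, and it is not a formality: the inclusion $N_G(\Phi)/\Phi\hookrightarrow\aut(\cX_b)$ is automatic, while equality can fail for quotients of the Hermitian curve, and your proposed justification (uniqueness of the $(\mathbb{Z}/p)^2$-cover that is unramified outside $P_\infty$ and maximal) is itself an unproved and delicate statement. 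The paper never lifts. It works intrinsically on the model (\ref{eqthII}): it writes down an explicit $p$-group $\Psi$ of automorphisms of order $q^2/p$, uses the zero Hasse--Witt invariant and the count $1+q^3/p^2$ of rational places (Results \ref{resth11.129}, \ref{lem15042023}) to show $\Psi$ is the full Sylow $p$-subgroup, and then bounds the tame complement by passing to the rational quotient by the characteristic subgroup $\Omega\le\Psi$ and invoking Dickson's classification of subgroups of $\PGL(2,q)$; for (IId) it shows directly, via pole-number and tangent-line arguments, that any isomorphism must send $(\xi,\rho)\mapsto(\bar\xi,\kappa\bar\rho)$ with $\kappa\in\mathbb{F}_p^*$, whence $\bar b=\kappa b$. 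If you want to keep your route you must either prove the cover-uniqueness statement or switch to such an intrinsic argument; as written, (IIb) and (IId) are not established. (Your normalizer computation $|N_S(\Phi)|=pq^2$, $|C\cap N_G(\Phi)|=p-1$ is consistent with the answer, but it is conditional on the lift, and the conjugacy analysis behind (IId) — which cosets of $\Phi\cap Z(S)$ actually arise under conjugation by $S$ and by the torus — is also only asserted.)

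There is a second, smaller gap in (IIc). The functions you exhibit have pole orders $q/p$, $2q/p$ and $q+1$ at $P_\infty$ (the coordinate $Y=\rho$ has pole order $2q/p$, not $q/p+q/p^2$), and $q/p+q/p^2$ does not lie in $\langle q/p,\,q+1\rangle$; indeed $\langle q/p,q+1\rangle$ has strictly more than $\mathfrak{g}(\cX_b)$ gaps, so your gap-count argument cannot close with only these three elements. Producing the generator $q/p+q/p^2$ is the substantive step: the paper takes $\tau=\rho-\kappa\xi^2$ with $\kappa^q=\tfrac{1}{2b^p}$, so that the leading terms of $\big(\sum\xi^{p^{i-1}}\big)^2$ and $2b\kappa^{q}\xi^{2q/p\cdot p}$ cancel, and verifies via an auxiliary plane model that $\tau$ has a single pole of order $q/p+q/p^2$. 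Once that function is in hand, your Apéry-set computation is a legitimate substitute for the paper's telescopic-semigroup lemma (Result \ref{resgeneresemigroup}), and your Riemann--Hurwitz computation for (IIa) via the ramification filtration $\Phi_0=\Phi_1=\Phi$, $\Phi_i=\Phi\cap Z(S)$ for $2\le i\le q+1$ is correct and matches the genus the paper imports from \cite{GSX}.
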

The curve $\cX$ defined by Equation (\ref{eqthIII}) can also be obtained by fiber product and it is useful for applications in Coding theory; see \cite{vg}. Our contribution consists in determining its full $\mathbb{F}_{q^2}$-automorphism group. 
\begin{theorem}
\label{thIII} The curve $\cX$ defined by Equation (\ref{eqthIII}) has the following properties.
\begin{itemize}
\item[(IIIa)] $\mathfrak{g}(\cX)=\textstyle\frac{1}{8}q(q-2)$
\item[(IIIb)] The $\mathbb{F}_{q^2}$-automorphism group of $\cX$ has order $\ha q^2$ and exponent $4$.
 \end{itemize}
\end{theorem}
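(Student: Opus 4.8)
The plan is to exploit the description of $\cX$ as the quotient curve $\cH_q/\Phi$ furnished by Theorem \ref{th1}(III): here $p=2$ and $\Phi=\langle g\rangle$ is cyclic of order $4=p^2$, generated by a regular unipotent $g\in G\cong\PGU(3,q)$ which, in suitable coordinates on $\cH_q\colon y^q+y=x^{q+1}$, acts by $(x,y)\mapsto(x+a,\,y+a^qx+b)$ with $a\ne 0$ and $a^{q+1}+b^q+b=0$; its square is the central involution $g^2\colon(x,y)\mapsto(x,y+a^{q+1})$ of the Sylow $p$-subgroup of $G$ containing $\Phi$. I assume $q$ is large enough that $\gg(\cX)\ge 2$, so that $\aut_{\Fs}(\cX)$ is finite.

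For (IIIa) I would apply Riemann--Hurwitz to the degree-$4$ Galois cover $\cH_q\to\cX$. As $a\ne0$, none of $g,g^2,g^3$ fixes an affine point, so $\Phi$ fixes only the point $P_\infty$ at infinity and $\cH_q\to\cX$ is totally (wildly) ramified over the image of $P_\infty$ and unramified elsewhere. With the local uniformizer $t=x/y$ at $P_\infty$, a short valuation computation gives $v_{P_\infty}(g(t)-t)=2$ and $v_{P_\infty}(g^2(t)-t)=q+2$, so the ramification filtration at $P_\infty$ is $G_0=G_1=\Phi$, $G_2=\cdots=G_{q+1}=\langle g^2\rangle$, $G_i=1$ for $i\ge q+2$. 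Hence the different exponent at $P_\infty$ is $\sum_{i\ge0}(|G_i|-1)=2\cdot 3+q=q+6$, and substituting $\gg(\cH_q)=\ha q(q-1)$ into $2\gg(\cH_q)-2=4(2\gg(\cX)-2)+(q+6)$ gives $\gg(\cX)=\eit q(q-2)$.

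For (IIIb) I would first build a large enough subgroup of $\aut_{\Fs}(\cX)$. Every element of $N_G(\Phi)$ descends to an $\Fs$-automorphism of $\cX=\cH_q/\Phi$, and the kernel of this action is exactly $\Phi$ (an automorphism of $\cH_q$ acting trivially on $\cX$ is a deck transformation). Since $N_G(\Phi)$ permutes $\Fix(\Phi)=\{P_\infty\}$, it lies in $\mathrm{Stab}_G(P_\infty)=S\rtimes C$, where $S$ is the Sylow $p$-subgroup of order $q^3$ and $C$ is cyclic of order $q^2-1$. Conjugation by the generator $(x,y)\mapsto(\lambda x,\lambda^{q+1}y)$ of $C$ scales the parameter $a$ of $g$ to $a\lambda^{-1}$, which forces $\lambda=1$; conjugation of $g$ by $s\in S$, $s\colon(x,y)\mapsto(x+a',y+a'^qx+b')$, leaves $a$ unchanged and replaces $b$ by $b+\Tr_{\Fs/\fq}(a^qa')$, so $s\in N_S(\Phi)$ if and only if $\Tr_{\Fs/\fq}(a^qa')\in\{0,a^{q+1}\}$. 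Each of these two $\fq$-affine conditions is met by $q$ values of $a'$, and each admissible $a'$ extends to $q$ elements of $S$; thus $N_G(\Phi)=N_S(\Phi)$ is a $p$-group of order $2q^2$ and its image $\bar P:=N_G(\Phi)/\Phi\le\aut_{\Fs}(\cX)$ has order $\ha q^2$. Moreover every $s=(a',b')$ in $S$ satisfies $s^2\colon(x,y)\mapsto(x,y+a'^{q+1})$ and $s^4=\mathrm{id}$, so $S$, hence $\bar P$, has exponent dividing $4$; it has exponent exactly $4$ since one can pick $s\in N_S(\Phi)$ with $a'^{q+1}\notin\{0,a^{q+1}\}$, so that $s^2\notin\Phi$ (the admissible $a'$ form a set of $2q$ elements while at most $q+2$ of them have $a'^{q+1}\in\{0,a^{q+1}\}$).

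The remaining and non-routine step is the reverse inclusion $\aut_{\Fs}(\cX)\subseteq\bar P$. Note first that $|\bar P|=\ha q^2>\ha q(q-2)=4\,\gg(\cX)=\tfrac{2p}{p-1}\,\gg(\cX)$, so the pair $(\cX,\bar P)$ is a \emph{big action} in the sense of Lehr--Matignon; thus $\bar P$ fixes a unique point $\bar P_\infty$ (the image of $P_\infty$), $\cX/\bar P\cong\mathbb{P}^1$, and the same holds for any $p$-subgroup of $\aut_{\Fs}(\cX)$ containing $\bar P$. I would then argue that $\bar P$ is a Sylow $p$-subgroup of $\aut_{\Fs}(\cX)$ — via the classification of big actions together with the Riemann--Hurwitz relation for $\cX\to\cX/\bar P$ dictated by the ramification data of (IIIa) — and that $\aut_{\Fs}(\cX)$ fixes $\bar P_\infty$, hence equals $\bar P\rtimes C'$ with $C'$ cyclic of order prime to $p$ acting faithfully on the cotangent line at $\bar P_\infty$. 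Finally $C'=1$: this I would deduce from a lifting/rigidity argument, namely that $\cH_q$ is the unique $\Fs$-maximal curve of genus $\ha q(q-1)$ dominating $\cX$ with the ramification found in (IIIa) (uniqueness of the Hermitian curve among maximal curves of its genus), so that the cover $\cH_q\to\cX$ is $\aut_{\Fs}(\cX)$-equivariant and every $\Fs$-automorphism of $\cX$ lifts to one of $\cH_q$ normalizing $\Phi$, giving $\aut_{\Fs}(\cX)\hookrightarrow N_G(\Phi)/\Phi=\bar P$. Combined with the previous paragraph this yields $\aut_{\Fs}(\cX)=\bar P$, of order $\ha q^2$ and exponent $4$. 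The hard part, as indicated, is controlling $\aut_{\Fs}(\cX)$ from above — ruling out both a larger $p$-group and a nontrivial prime-to-$p$ factor — which requires the structure theory of $p$-group actions on curves (big actions, as refined by Giulietti--Korchm\'aros) rather than a direct computation; everything else reduces to the explicit calculations sketched above.
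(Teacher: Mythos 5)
Your treatment of (IIIa) is correct: the ramification filtration $G_0=G_1=\Phi$, $G_2=\cdots=G_{q+1}=\langle g^2\rangle$ at $P_\infty$ gives different exponent $q+6$ and hence $\mathfrak{g}(\cX)=\eit q(q-2)$. (The paper simply quotes the Garc\'ia--Stichtenoth--Xing genus computation here, so your version is more explicit.) The lower bound is also sound: your count $|N_G(\Phi)|=|N_S(\Phi)|=2q^2$ matches the paper's construction of a subgroup of $\aut(\cX)$ of order $\ha q^2$, and your argument for exponent exactly $4$ (choosing $a'$ admissible with $a'^{q+1}\notin\{0,1\}$, possible since $2q>q+2$) is fine and is in fact more than the paper writes down for the exponent claim.

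The genuine gap is the reverse inclusion $\aut(\cX)\subseteq\bar P$, which you yourself flag as the hard part and only sketch. Two problems. First, the claim that $\bar P$ is a Sylow $2$-subgroup is deferred to an unspecified appeal to the classification of big actions; the paper gets it cheaply from maximality: $\cX$ has zero Hasse--Witt invariant, so any $2$-subgroup of $\aut(\cX)$ fixes exactly one point and acts semiregularly on the remaining $N-1=\qa q^2(q+2)$ rational places, whose $2$-part is exactly $\ha q^2$. Second, and more seriously, your mechanism for killing the prime-to-$2$ complement $C'$ does not work as stated: the uniqueness of $\cH_q$ among $\mathbb{F}_{q^2}$-maximal curves of genus $\ha q(q-1)$ says nothing about the cover $\cH_q\to\cX$ being canonical, and in general an automorphism of a quotient $\cY/\Gamma$ need not lift to $\cY$ --- the image of $N_{\aut(\cY)}(\Gamma)$ in $\aut(\cY/\Gamma)$ can be a proper subgroup, and ruling that out is precisely the content of the theorem. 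So the asserted embedding $\aut(\cX)\hookrightarrow N_G(\Phi)/\Phi$ is unproved. The paper avoids lifting altogether: since $|\bar P|=\ha q^2>\mathfrak{g}(\cX)$, the quotient $\cX/\bar P$ is rational and carries exactly $\ha q+1$ places under the affine rational places of $\cX$; an automorphism of odd prime order normalizes the (normal, unique) Sylow $2$-subgroup, hence induces a nontrivial automorphism of this rational quotient fixing the image of $P_\infty$ and preserving that set of $\ha q+1$ places, and an orbit count on $\mathbb{P}^1$ then yields a contradiction. You would need to substitute an argument of this kind (or actually carry out the big-action classification you invoke) for the proposal to close.
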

Our Notation and terminology are standard; see \cite{HKT,stich1993,huppertI1967, serre1979/87-68
7389}. We mostly use the language of function field theory rather than that of algebraic geometry.

\section{Background}
Let $\cX$ be a projective, non-singular, geometrically irreducible, algebraic curve of genus $\mathfrak{g}\geq 2$ embedded in an $r$-dimensional projective space $\PG(r,\mathbb{F}_\ell)$ over a finite field of order $\ell$ of characteristic $p$.  Let $\mathbb{F}_\ell(\cX)$ be the function field of $\cX$ which is an algebraic function field of transcendency degree one with constant field $\mathbb{F}_\ell$. As it is customary, $\cX$ is viewed as a curve defined over the algebraic closure $\mathbb{F}$ of $\mathbb{F}_\ell$. Then the function field $\mathbb{F}(\cX)$ is the constant field extension of $\mathbb{F}_\ell(\cX)$ with respect to field extension $\mathbb{F}|\mathbb{F}_\ell$. The automorphism group $\aut(\cX)$ of $\cX$ is defined to be the automorphism group of $\mathbb{F}(\cX)$ fixing every element of $\mathbb{F}$. It has a faithful permutation representation on the set of all points $\cX$ (equivalently on the set of all places of $\mathbb{F}(\cX))$. The automorphism group $\aut({\mathbb{F}_\ell}(\cX))$ of $\mathbb{F}_\ell(\cX)$ is a subgroup of $\aut(\cX)$. In particular, the action of $\aut({\mathbb{F}_\ell}(\cX))$ on the $\mathbb{F}_\ell$-rational points of $\cX$ is the same as on the set of degree $1$ places of $\mathbb{F}_\ell(\cX)$.

Let $G$ be a finite subgroup of $\aut({\mathbb{F}_{\ell}}(\cX))$. The \emph{Galois subcover} of $\mathbb{F}_\ell(\cX)$ with respect to $G$ is the fixed field of $G$, that is, the subfield ${\mathbb{F}_{\ell}}(\cX)^G$ consisting of all elements of $\mathbb{F}_{\ell}(\cX)$ fixed by every element in $G$. Let $\cY$ be a non-singular model of ${\mathbb{F}_{\ell}}(\cX)^G$, that is,
a projective, non-singular, geometrically irreducible, algebraic curve with function field ${\mathbb{F}_{\ell}}(\cX)^G$. Then $\cY$ is the \emph{quotient curve of $\cX$ with respect to $G$} and is denoted by $\cX/G$. The covering $\cX\mapsto \cY$ has degree $|G|$ and the field extension $\mathbb{F}_{\ell}(\cX)|\mathbb{F}_{\ell}(\cX)^G$ is Galois.


If $P$ is a point of $\cX$, the stabiliser $G_P$ of $P$ in $G$ is the subgroup of $G$ consisting of all elements
fixing $P$.
\begin{result}\cite[Theorem 11.49(b)]{HKT}
\label{resth11.49b} All $p$-elements of $G_P$ together with the identity form a normal subgroup $S_P$ of $G_P$ so that $G_P=S_P\rtimes C$, the semidirect product of $S_P$ with a cyclic complement $C$.
\end{result}
\begin{result}\cite[Theorem 11.129]{HKT}
\label{resth11.129} If $\cX$ has zero Hasse-Witt invariant then every non-trivial element of order $p$ has a unique fixed point, and hence no non-trivial element in $S_P$ fixes a point other than $P$.
\end{result}
A useful corollary of Result \ref{resth11.129} is the following.
\begin{result}
\label{lem15042023} Let $\cX$ be an  $\mathbb{F}_\ell$-rational curve whose number of $\mathbb{F}_\ell$-rational points is $N\ge 2$. If $\cX$ has zero Hasse-Witt invariant and $S$ is a $p$-subgroup of $\aut({\mathbb{F}_\ell}(\cX))$ then $S$ fixes a unique point and $|S|$ divides $N-1$.
\end{result}
The following result is due to Stichtenoth \cite{stichtenoth1973I}. 
\begin{result} \cite[Theorem 11.78(i)]{HKT}
\label{sti1} Let $S$ be a $p$-subgroup of $\mathbb{F}(\cX)$ fixing a point. If $|S|$ is larger than the genus of $\mathbb{F}(\cX)$ then the Galois subcover of $\mathbb{F}(\cX)$ with respect to $S$ is rational. 
\end{result}
 
 From now on let $\ell=q^2$ with $q=p^h$ and assume that $\cX$ is a $\mathbb{F}_{q^2}$-maximal curve.
\begin{result}
\label{zeroprank} All $\mathbb{F}_{q^2}$-maximal curves have zero Hasse-Witt invariant.
\end{result}
The following result is commonly attributed to Serre, see Lachaud \cite{lachaud1987}.
\begin{result}\cite[Theorem 10.2]{HKT}
\label{resth10.2} For every subgroup $G$ of $\aut({\mathbb{F}_{q^2}}(\cX))$, the quotient curve $\cX/G$ is also  $\mathbb{F}_{q^2}$-maximal.
\end{result}

Let $S$ be a numerical semigroup. The gaps of $S$ are the elements in $\mathbb{N}\setminus S$. The number $g(S)$ of gaps of $S$ is the {\em{genus}} of $S$. If $S$ is the Weierstrass semigroup of a curve at a point then $g(S)$ coincides with the genus of the curve.  Let $(a_1,\ldots,a_k)$ be a sequence of positive integers such that their greatest common divisor is 1. Let $d_0=0,\,d_i=g.c.d.(a_1,\ldots,a_k)$ and $A_i=\{\frac{a_1}{d_i},\ldots,\frac{a_i}{d_i}\}$ for $i=1,\ldots,k$. Let $S_i$ be the semigroup generated by $A_i$. The sequence $(a_1,\ldots,a_k)$ is {\em{telescopic}} whenever $\frac{a_i}{d_i}\in S_{i-1}$ for $i=2,\ldots,k$. A  {\em{telescopic semigroup}} is a numerical semigroup generated by a telescopic sequence.  
\begin{result}\label{resgeneresemigroup}\cite[Lemma 6.5]{KirfelPellikaan1995}
 For the semigroup generated by a telescopic sequence $(a_1,\ldots,a_k)$,  
 $$l_g(S_k)=\sum_{i=1}^{k}(\frac{d_{i-1}}{d_i}-1)a_1,\quad
 g(S_k)=\frac{l_g(S_k)+1}{2}.$$
\end{result}
\subsection{The Hermitian curve and its function field}
 In the remainder of the paper, we focus on the Hermitian function fields and its Galois subcovers. The usual affine equation, or canonical form, of the Hermitian curve $\cH_q$, as an $\mathbb{F}_{q^2}$-rational curve, is $Y^q+Y=X^{q+1}$ and hence its function field is $\mathbb{F}_{q^2}(x,y)$ with $y^q+y-x^{q+1}=0$. Another useful equation of $\cH_q$ is $Y^q-Y+\omega X^{q+1}=0$ with $\omega\in \mathbb{F}_{q^2}$ such that $\omega^{q-1}=-1$. We collect a number of known results on the $\mathbb{F}_{q^2}$-automorphism group $\aut(\mathbb{F}_{q^2}(\cH_q))$ of $\cH_q$. For more details, the Reader is referred to  \cite{hoffer1972,huppertI1967}.
\begin{result}\cite[Theorem 12.24 (iv), Proposition 11.30]{HKT}
\label{sect12.3}  
$\aut(\mathbb{F}_{q^2}(\cH_q))$ $\cong \PGU(3,q)$ and $\aut(\mathbb{F}_{q^2}(\cH_q))$ acts on the set of all $\mathbb{F}_{q^2}$-rational points of $\cH_q$ as $\PGU(3,q)$ in its natural doubly transitive permutation  representation of degree $q^3+1$ on the isotropic points of the unitary polarity of the projective plane $\PG(2,\mathbb{F}_{q^2})$. Furthermore, 
$\aut(\mathbb{F}_{q^2}(\cH_q))$ $\cong$ $\aut(\mathbb{F}(\cH_q))$. 
\end{result}
The maximal subgroups of $\PGU(3,q)$ were determined by Mitchell \cite{mit} and Hartley \cite{har}, see also Hoffer \cite{hoffer1972}. Let $S_p$ be a Sylow $p$-subgroup of $\aut(\mathbb{F}_{q^2}(\cH_q))=\PGU(3,q)$. From Results \ref{lem15042023} and \ref{zeroprank}, it may be assumed up to conjugacy that the unique fixed point of $S_p$ is the point at infinity $Y_\infty$ of $\cH_q$.  The following result describes the structure of the stabiliser of $Y_\infty$ in $\aut(\mathbb{F}_{q^2}(\cH_q))$. 

\begin{result}\cite[Section 4]{GSX}
\label{struct} Let the Hermitian function field be given by its canonical form $\mathbb{F}_{q^2}(x,y)$ with $y^q+y-x^{q+1}=0$. Then the  stabiliser $G$ of $Y_\infty$ in $\aut(\mathbb{F}_{q^2}(\cH_q))$ consists of all maps
\begin{equation}
    \label{correzione}
    \varphi_{a,b,\lambda}:\,(x,y)\mapsto (\lambda x+a, a^q \lambda x +\lambda^{q+1}y +b)
 \end{equation}
 where 
 \begin{equation}
 \label{eqA250523}
 a,b,\lambda\in \mathbb{F}_{q^2},\,\,b^q+b=a^{q+1},\,\,\lambda\not=0. 
\end{equation}
In particular, $G$ contains a subgroup of order $q^3(q-1)$ consisting of the maps
\begin{equation} 
 \label{eq250523}
 \psi_{a,b,\lambda}:\,(x,y)\mapsto (\lambda x+a, a^q \lambda x +y +b)
 \end{equation}
 where 
 \begin{equation}
 \label{eqA250523}
 a,b,\lambda\in \mathbb{F}_{q^2},\,\,b^q+b=a^{q+1},\,\,\lambda^{q+1}=1. 
 \end{equation}
 Also, $G=S_p\rtimes C$ where $S_p=\{\psi_{a,b,1}| b^q+b=a^{q+1},\,\,a,b\in \mathbb{F}_{q^2}\}$ and $C=\{\psi_{0,0,\lambda}|\lambda^{q+1}=1,\,\,\lambda\in \mathbb{F}_{q^2}\}$.
 \end{result} From Result \ref{struct}, $S_p$ is the Sylow $p$-subgroup of the stabiliser of $Y_\infty$ in $\aut(\mathbb{F}_{q^2}(\cH_q))$.
\begin{result}
\label{conjcl} $S_p$ has the following properties.
\begin{itemize}
\item[(i)] The center $Z(S_p)$  of $S_p$ has order $q$ and it consists of all maps $\psi_{0,b,1}$ with $b^q+b=0,\,\, b\in\mathbb{F}_{q^2}$. Also, $Z(S_p)$ is an elementary abelian group of order $p$.
\item[(ii)]  The non-trivial elements of $S_p$ form two conjugacy classes in  the stabiliser of $Y_\infty$ in $\aut(\mathbb{F}_{q^2}(\cH_q))$, one comprises all non-trivial elements of $Z(S_p)$, the other does the remaining $q^3-q$ elements.
\item[(iii)] The elements of $G$ other than those in $Z(S_p)$ have order $p$, or $p^2=4$ according as $p>2$ or $p=2$.
\end{itemize}
\end{result}
As a corollary, the subgroups of order $p^2$ of $S_p$ are determined.
\begin{result}
\label{p2} Up to conjugacy in the stabiliser of $Y_\infty$ in $\aut(\mathbb{F}_{q^2}(\cH_q))$, the subgroups of $S_p$ of order $p^2$ in terms of their generators are the following.

For $p$ odd, every subgroup of order $p^2$ of $S_p$ is an elementary abelian group. There are two such families of subgroups of order $p^2$, named of type $\mathfrak{U}$ and $\mathfrak{V}$, according as it is contained in $Z(S_p)$ or not. More precisely, if 
$$
U_b=\langle \psi_{0,1,1},\psi_{0,b,1}\rangle;\, b\in \mathbb{F}_{q}\setminus\mathbb{F}_p,\qquad
V_c=\langle \psi_{1,\nicefrac{1}{2},1}, \psi_{0,c,1}\rangle;\, c^q+c=0, \,c\in\mathbb{F}_{q^2}.
$$
then $\mathfrak{U}=\{U_b|b\in \mathbb{F}_{q}\setminus\mathbb{F}_p\}$ and $\mathfrak{V}=\{V_c|c^q+c=0,\,c\in\mathbb{F}_{q^2}\}.$
For $p=2$, a subgroup of order $p^2=4$ of $S_p$ is either elementary abelian, or cyclic, according as it is contained in $Z(S_p)$ or not. In the former case, such a subgroup is generated by $\psi_{0,1,1}$ and $\psi_{0,b,1}$, that is as $U_b$ for $p$ odd, and it is named of type $\mathfrak{U}$; in the cyclic case, up to conjugacy, it is generated by
$\psi_{1,c,1}$ with $c^q+c+1=0, \,c\in \mathbb{F}_{q^2}$.
\end{result}
\begin{rem} 
\label{rem250523} If the Hermitian function field be given by the canonical form $\mathbb{F}_{q^2}(x,y)$ with $y^q+y+\omega x^{q+1}=0$ where $\go\in \mathbb{F}_{q^2}$ if a fixed element such that $\omega^{q-1}=-1$, then the above Results \ref{struct}, \ref{conjcl} and  \ref{p2} remain valid up to the following changes: In (\ref{eq250523}), $\psi_{a,b,\lambda}$ is replaced by
$$\varphi_{a,b,\lambda}:\,(x,y)\mapsto (\lambda x+a, a^q \lambda \omega x +y +b),$$
and (\ref{eqA250523}) is replaced by
$$ a,b,\lambda\in \mathbb{F}_{q^2},\,\,b^q+b+\omega a^{q+1},\,\lambda^{q+1}=1.$$ 
\end{rem} 

The Galois subcovers of $\mathbb{F}_{q^2}(\cH_q)$ with respect to a subgroup $H$ of prime order were thoroughly classified in \cite{cossidente-korchmaros-torres2000}. For the case $|H|=p$, the classification is reported in the following result.
\begin{result}\cite[Theorem 5.74]{HKT}
\label{ckt} Let $H$ be a subgroup of $\aut(\mathbb{F}_{q^2}(\cH_q))$ of order $p$. The Galois subcover of $\mathbb{F}_{q^2}(\cH_q)$ with respect to $H$ is $\mathbb{F}_{q^2}$-isomorphic to the function field $\mathbb{F}_{q^2}(\xi,\eta)$ where either \rm(i) or \rm(ii) hold:
\begin{itemize}
\item[(i)] $\sum_{i=1}^{h}\eta^{\nicefrac{q}{p^i}}+\go \xi^{q+1}=0$ with $\omega^{q-1}=-1$, and $H$ is in the center of a Sylow $p$-subgroup of $\aut(\mathbb{F}_{q^2}(\cH_q))$;
\item[(ii)] $\eta^q+\eta -(\sum_{i=1}^h\ \xi^{\nicefrac{q}{p^i}})^2=0$ for $p>2$, and $H$ is not in the center of a Sylow $p$-subgroup of $\aut(\mathbb{F}_{q^2}(\cH_q))$.
\end{itemize}
\end{result}
The following result is a corollary of \cite[Section 3]{GSX}. 
\begin{result} 
\label{gsx1}  Let $\mathfrak{g}$ be the genus of the Galois cover of $\mathbb{F}_{q^2}(\cH_q)$ with respect to a subgroup $H$ of $\aut(\mathbb{F}_{q^2}(\cH_q))$ of order $p^2$. If $S_p$ is the unique Sylow $p$-subgroup of  $\aut(\mathbb{F}_{q^2}(\cH))$ containing $H$, and  $h$ denotes the order of the subgroup of $H\cap Z(S_p)$ then one of the following cases occurs.    
$$\mathfrak{g}=
\begin{cases}
\ha \frac{q}{p}(\frac{q}{p}-1)\,\,for\,\,h=1;\\
\ha \frac{q}{p^2}(q-1)\,\,for\,\, h=2. 
\end{cases}
$$
\end{result}

\section{Galois subcovers of $\mathbb{F}_{q^2}(\cH)$ with respect to subgroup of type $\mathfrak{U}$} 
As in Remark \ref{rem250523}, we take $\mathbb{F}_{q^2}(\cH_q)$ in its canonical form $\mathbb{F}_{q^2}(x,y)$ with $y^q-y+\omega x^{q+1}=0$ and $\omega^{q-1}=-1$. The group $\Phi=\langle \varphi_{0,1,1}\rangle$ has order $p$, and it is contained in $Z(S_p)$. Let $\eta=y^p-y$ and $\xi=x$. Then $\varphi_{0,1,1}(\eta)=\varphi_{0,1,1}(y^p-y)=\varphi_{0,1,1}(y)^p-\varphi_{0,1,1}(y)=(y+1)^p-(y+1)=y^p-y=\eta$. Moreover, $y^q-y=Tr(\eta)$. 
Since $\varphi_{0,1,1}$ fixes $\xi$, this shows that the Galois subcover $\mathbb{F}_{q^2}(\cF')$ of $\mathbb{F}_{q^2}(\cH_q)$ with respect to $\Phi$ is as in (i) of Result \ref{ckt}. That equation is
\begin{equation}
\label{eqiia}
\sum_{i=1}^{h}\ \eta^{\nicefrac{q}{p^i}}+\go x^{q+1}=0.
\end{equation}
Fix $b\in \fq$ with $b^p\neq b$. Then $\varphi_{0,b,1}$ commutes with $\varphi_{0,1,1}$, and hence $\varphi_{0,b,1}$ induces an automorphism $\varphi$ of $\mathbb{F}_{q^2}(\cF')$. More precisely, a straightforward computation shows that $\varphi$ is the map $\varphi:(\xi,\eta)\mapsto (\xi,\eta+b^p-b)$.  Let $\Phi_b$ be the  $\fq$-automorphism group of $\mathbb{F}_{q^2}(\cF')$ generated by $\varphi$. Then 
the Galois subcover  of $\mathbb{F}_{q^2}(\cH_q)$ with respect to $\langle \varphi_{0,1,1},\varphi_{0,b,1}\rangle$ is the Galois subcover $G_b$ of $\Fs(\cF')$ with respect to $\Phi_b$. 
\begin{theorem}
\label{propiia}
The Galois subcover $G_b$ of $\Fs(\cF')$ with respect to $\Phi_b$ has the following properties:
\begin{itemize}
\item[(I)] $G_b=\Fs(\xi,\rho)$ with
\begin{equation}
\label{eq80323}
\sum_{i=1}^{h-1}(b-b^{p^i})\rho^{p^{i-1}}+\go \xi^{q+1}=0
\end{equation}
where $q=p^h$.
\item[(II)] The genus of $G_b$ equals $\ha q(p^{h-2}-1)=\ha q(\frac{q}{p^2}-1).$
\item[(III)] Let $P_\infty$ be the unique point of $G_b$ at infinity. The Weierstrass semigroup at the unique place centered at $P_\infty$ is generated by $p^{h-2}=\frac{q}{p^2}$ and $q+1$.
\item[(IV)] The $\mathbb{F}_{q^2}$-automorphism group of $G_b$ is the semidirect product of a normal subgroup of order $\frac{q}{p^2}$ by a (cyclic) complement of order $(q+1)(p-1)$.
\item[(V)] Let $\bar{b}\in \mathbb{F}_q\setminus \mathbb{F}_p$. Then $G_b$ and $G_{\bar{b}}$ are $\mathbb{F}_{q^2}$-isomorphic in exactly one of the following cases
\begin{itemize}
     \item[(i)] $b,\bar{b}\in \mathbb{F}_{p^2}$, or $b,\bar{b}\in \mathbb{F}_{p^3}$,
     \item[(ii)] $b,\bar{b}\not\in \mathbb{F}_{p^2}\cup \mathbb{F}_{p^3}$ and
\begin{equation}\label{eqA040423} \bar{b}= \frac{\alpha b +\beta}{\gamma b + \delta}
\end{equation}
with $\alpha,\beta,\gamma,\delta \in \mathbb{F}_p$ and $\alpha \delta - \beta \gamma \neq 0$.
\end{itemize}
\item[(VI)] Assume that $\mathbb{F}_q$ has a proper subfield $\mathbb{F}_{p^m}$ larger than $\mathbb{F}_p$. If $b\in\mathbb{F}_p$ and $\bar{b}\in\mathbb{F}_q\setminus \mathbb{F}_{p^m}$ then $G_b$ and $G_{\bar{b}}$ are not $\mathbb{F}_{q^2}$-isomorphic.
\end{itemize}
\end{theorem}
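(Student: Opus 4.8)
\textbf{Proof plan for Theorem \ref{propiia}.}

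The plan is to work with the concrete field extension $\Fs(\cF')|\Fs(\xi,\rho)$ where $\Fs(\cF')$ is given by (\ref{eqiia}) and $\Phi_b=\langle\varphi\rangle$ with $\varphi:(\xi,\eta)\mapsto(\xi,\eta+b^p-b)$. For part (I), I would first produce a function $\rho$ fixed by $\varphi$ that, together with $\xi$, generates the fixed field. A natural choice is an additive polynomial in $\eta$: set $\rho=\eta^p-\eta$ up to normalization, or more efficiently observe that $\varphi$ translates $\eta$ by the constant $c:=b^p-b$, so any $\Fs$-linear combination $\sum_i d_i\eta^{p^i}$ with $\sum_i d_i c^{p^i}=0$ is fixed. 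Matching this with the trace-type expression $\sum_{i=1}^h\eta^{q/p^i}$ appearing in (\ref{eqiia}) and using $c^{q}+ \dots$ telescoping identities for the Artin--Schreier-type sum leads, after a short linear-algebra manipulation with the Moore-type relations among $b-b^{p^i}$, to the displayed equation (\ref{eq80323}). The key algebraic identity to verify is that (\ref{eq80323}) indeed lies in the fixed field and that $[\Fs(\cF'):\Fs(\xi,\rho)]=p$, which follows because $\rho$ is visibly $\varphi$-invariant and $\eta$ satisfies a degree-$p$ equation over $\Fs(\xi,\rho)$.

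For (II) and (III), the curve (\ref{eq80323}) has a single place at infinity $P_\infty$; I would compute the pole orders of $\xi$ and $\rho$ at $P_\infty$ directly from the equation. The dominant term $\go\xi^{q+1}$ balanced against the additive polynomial $\sum(b-b^{p^i})\rho^{p^{i-1}}$ of degree $p^{h-2}$ in $\rho$ forces $v_{P_\infty}(\xi)$ and $v_{P_\infty}(\rho)$ to be $-p^{h-2}$ and $-(q+1)$ respectively (after checking $\gcd(p^{h-2},q+1)=1$, which holds since $q+1$ is coprime to $p$). Then the Weierstrass semigroup at $P_\infty$ contains $\langle p^{h-2},q+1\rangle$; since this is a two-generator (hence telescopic) numerical semigroup, Result \ref{resgeneresemigroup} gives its genus as $\tfrac12(p^{h-2}-1)q=\tfrac12 q(\tfrac{q}{p^2}-1)$, which by Result \ref{gsx1} (case $h=2$, interpreting its parameter correctly) equals the actual genus; equality of genera forces the semigroup to be exactly $\langle p^{h-2},q+1\rangle$, proving (III), and (II) follows. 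Alternatively the genus can be read off from Result \ref{gsx1} directly with no semigroup computation, and then (III) from a gap count.

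For (IV), I would exploit that $G_b$ is itself a quotient of $\cH_q$ and that the normalizer of $\Phi$ (or of $\langle\varphi_{0,1,1},\varphi_{0,b,1}\rangle$) inside $\PGU(3,q)$ induces automorphisms of $G_b$; from the structure in Result \ref{struct} one sees a group of order $\tfrac{q^3}{p^2}\cdot\tfrac{q-1}{\gcd}$ mapping onto $\aut(\Fs(G_b))$ with kernel $\langle\varphi_{0,1,1},\varphi_{0,b,1}\rangle$, giving a group of order $\tfrac{q}{p^2}(q+1)(p-1)$; then I would use Result \ref{resth11.49b} at $P_\infty$ together with a Hurwitz-genus count (the genus is too small to permit a larger automorphism group, by the usual bounds for curves with zero Hasse--Witt invariant) to show this is the full group and has the claimed semidirect product shape, the normal $p$-subgroup fixing $P_\infty$ and the cyclic complement of order $(q+1)(p-1)$ acting on it. The parts (V) and (VI) are the real obstacle: an $\Fs$-isomorphism $G_b\to G_{\bar b}$ must (by uniqueness of $P_\infty$ as the unique totally ramified place of the $p^{h-2}$-cover to $\mathbb{P}^1$, or by being the unique place whose Weierstrass semigroup is $\langle p^{h-2},q+1\rangle$ when this is not symmetric) carry $P_\infty$ to $P_\infty$, hence be induced by a linear substitution $\xi\mapsto \lambda\xi$, $\rho\mapsto \mu\rho+(\text{lower})$; substituting into (\ref{eq80323}) and comparing coefficients reduces the isomorphism problem to the existence of a common ``shape'' for the additive polynomials $\sum(b-b^{p^i})T^{p^{i-1}}$ and $\sum(\bar b-\bar b^{p^i})T^{p^{i-1}}$ up to scaling and a $p$-power substitution, which is governed by the Frobenius-module structure of $b$ over $\F_p$. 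I expect the hard part to be classifying these additive polynomials up to $\F_{q^2}$-linear equivalence: this is where the dichotomy $b\in\F_{p^2}\cup\F_{p^3}$ versus the Möbius action of $\mathrm{PGL}_2(\F_p)$ in (\ref{eqA040423}) emerges, via analyzing the $\F_p[\Frob]$-submodule generated by $b$ and its annihilator, and (VI) will come from a degree/field-of-definition invariant of that module (the smallest subfield over which the $p$-polynomial is defined) which is preserved by the admissible substitutions.
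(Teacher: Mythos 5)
Your plan for (I)--(III) is essentially the paper's proof: the paper takes $\zeta=\eta^p-(b^p-b)^{p-1}\eta$, which is exactly the annihilating combination you describe (note that your first candidate $\rho=\eta^p-\eta$ is \emph{not} $\varphi$-invariant unless $b^p-b\in\mathbb{F}_p$), eliminates $\eta$ via $\tau=(b^p-b)^{-1}\eta$ and $\rho=\tau^p-\tau$, and then reads the genus and the semigroup off the resulting Artin--Schreier-type model by \cite[Lemmas 12.1 and 12.2]{HKT}; your pole-order/telescopic-semigroup route to (III), with the gap-count forcing equality of the semigroup with $\langle p^{h-2},q+1\rangle$, is an equivalent argument.

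The genuine gaps are in (IV) and (V). For (IV): since $U_b\le Z(S_p)$, the normalizer of $U_b$ in the stabiliser of $Y_\infty$ contains all of $S_p$, so the induced $p$-group on $G_b$ has order $\tfrac{q^3}{p^2}$, not $\tfrac{q}{p^2}$ as in your count (your division by the kernel is applied twice); this order is forced by the $1+\tfrac{q^3}{p^2}$ rational places on which the $p$-part acts semi-regularly away from $P_\infty$, and it is exactly the group $V$ the paper constructs. More importantly, ``the usual bounds'' cannot finish the argument: the group $W=V\rtimes\Lambda$ has order $\tfrac{q^3}{p^2}(q+1)(p-1)$, which is of magnitude $p^3\mathfrak{g}^2$, so no generic bound for zero Hasse--Witt invariant excludes a larger group. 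The paper instead pins down the tame part by expanding a prime-to-$p$ automorphism fixing $P_\infty$ and an affine rational place on the basis $1,\xi,\dots,\xi^{p^2},\rho$ of $\mathcal{L}((q+1)P_\infty)$, then deriving $c\in\mathbb{F}_p^*$ from the action on the zeros of $\xi$ (Vi\`ete) and $w^{q+1}=c$ from a polynomial identity; some version of this explicit step is needed and is missing from your plan. For (V): your reduction to comparing the additive polynomials is the correct first move (the paper obtains the system $\delta(\bar b-\bar b^{p^i})=c^{p^{i-1}}(b-b^{p^i})$, $i=1,\dots,h-1$, with $\delta=\sigma^{q+1}$, after using transitivity on affine rational places --- itself a byproduct of (IV) --- to normalize $\iota(O)=\bar O$). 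But the classification is not a formal consequence of ``Frobenius-module structure'': after eliminating $c$ and $\delta$ one lands on the explicit relation (\ref{eqcar}), and the dichotomy between the exceptional fields $\mathbb{F}_{p^2},\mathbb{F}_{p^3}$ and the M\"obius condition (\ref{eqA040423}) comes from the factorization of the bivariate polynomial in Lemma \ref{lem060423} into linear and hyperbolic components over $\mathbb{F}_p$. That lemma is the hard, nonobvious step; without it your ``common shape'' criterion is a restatement of the problem rather than a solution, and (VI) cannot yet be deduced.
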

\begin{proof} (I) We show first that the fixed field $F$ of $\Phi_b$ is generated by $\xi$ together with
\begin{equation}
\label{eq1iia}
\zeta=\eta^p-(b^p-b)^{p-1}\eta.
\end{equation} Since $\varphi(\xi)=\xi$ and
$$\varphi(\zeta)=\varphi(\eta)^p-(b^p-b)^{p-1}\varphi(\eta)=\eta^p-(b^p-b)^p-(b^p-b)^{p-1}(\eta+b^p-b)=
\eta^p-(b^p-b)^{p-1}\eta=\zeta,$$
we have $\Fs(\xi,\zeta)\subseteq F$. Furthermore, $[\Fs(\cF'):\Fs(\xi,\zeta)]=p$. Since $p$ is prime, this yields either $\Fs(\xi,\zeta)=F$ or $F=\Fs(\cF')$. The latter case cannot actually occur, and hence $F=\Fs(\xi,\zeta)$. Therefore $F=G_b$.
We have to eliminate $\eta$ from equations (\ref{eqiia}) and (\ref{eq1iia}). Let
$\tau=(b^p-b)^{-1}\eta.$ Replacing $\eta$ by $(b^p-b)\tau$ in (\ref{eq1iia}) gives
\begin{equation}
\label{eq2iia} \zeta=(b^p-b)^p(\tau^p-\tau).
\end{equation}
Let $\rho=\tau^p-\tau$. Then $$\rho+\rho^p+\ldots+\rho^{p^{h-2}}=\tau^{p^{h-1}}-\tau.$$
Furthermore, from $\eta=(b^p-b)\tau$,
$$\eta+\eta^p+\ldots+\eta^{p^{h-1}}=b^q\tau^{p^{h-1}}-b\tau-\sum_{i=1}^{h-1} b^{p^i} (\tau^p-\tau)^{p^{i-1}}=b(\tau^{p^{h-1}}-\tau)-\sum_{i=1}^{h-1} b^{p^i}(\tau^p-\tau)^{p^{i-1}}.$$
Therefore
$$\go\xi^{q+1}+b(\rho+\rho^p+\ldots+\rho^{p^{h-2}})-\sum_{i=1}^{h-1} b^{p^i}\rho^{p^{i-1}}$$
whence (\ref{eqiia}) follows.

(II) The claim follows from \cite[Lemma 12.1(b)]{HKT}.

(III) By (\ref{eq80323}), \cite[Lemma 12.2]{HKT} applies for $n=h-2$ and $m=q+1$, and (III) follows from the remark after the proof of that lemma. 


(IV) For $a\in \mathbb{F}_{q^2}$ let $v\in \mathbb{F}_{q^2}$ be any element satisfying $v^q-v+\go a^{q+1}=0$. A straightforward computation shows that the map $\psi_{a,v,1}$
\begin{equation*}
\label{eq7mar2023}
\begin{cases} \xi'=\xi+a\\
              \rho':=\rho+\go a^{qp^2}\xi^{p^2}-(\go^p a^{p^2}+(b^p-b)^{p-1}\go^p a^{qp})\xi^p-(b^p-b)^{p-1}\go a^q \xi+
              (v^p-v)^p-(b^p-b)^{p-1}(v^p-v)
\end{cases}
\end{equation*}
is an $\mathbb{F}_{q^2}$-automorphism of $G_b$. Also, $\psi_{a,v,1}=1$ if and only if $a=0$ and
$(v^p-v)^p-(b^p-b)^{p-1}(v^p-v)=0$. Therefore, these maps form a $\mathbb{F}_{q^2}$-automorphism group $V$ of $G_b$ of order $\frac{q^3}{p^2}$. From \cite[Lemma 12.1(ii)(c)]{HKT} $P_\infty$ is the unique place of $G_b$ centered at a point at infinity. Thus $\psi_{a,v,1}$ fixes $P_\infty$.

For any $\mu\in \mathbb{F}_p^*$ let $\lambda\in \mathbb{F}_{q^2}$ be any element such that $\lambda^{q+1}=\mu$. Then the map $\tau_{\lambda,\mu}$
\begin{equation}
\label{eqA7032023}
\begin{cases}
\xi'=\lambda \xi\\
\rho'=\mu \rho
\end{cases}
\end{equation}
is an $\mathbb{F}_{q^2}$-automorphism of $G_b$. These maps form an $\mathbb{F}_{q^2}$-automorphism group $\Lambda$ of $G_b$ of order $(q+1)(p-1)$. Furthermore, $\Lambda$ normalizes $V$ and hence they generate an $\mathbb{F}_{q^2}$- automorphism group $W=V\rtimes \Lambda$ of $G_b$ of order $(q+1)(p-1)\frac{q^3}{p^2}$.
We show that $W$ is the full $\mathbb{F}_{q^2}$-automorphism group of $G_b$.

Since $G_b$ is a $\mathbb{F}_{q^2}$-maximal function field of genus $\mathfrak{g}=\ha q(\frac{q}{p^2}-1)$ it has as many as $1+\frac{q^3}{p^2}$ $\mathbb{F}_{q^2}$-rational places. Assume that $\aut(G_b)$ has a $p$-subgroup $Z$ properly containing $V$. By Result \ref{zeroprank} $G_b$ has zero Hasse-Witt invariant. Therefore,  Result \ref{resth11.129} shows that each element in $Z$ has a exactly one fixed point. Therefore, $Z$ fixes the place centered at $P_\infty$ and acts semi-regularly on the set of the remaining $\frac{q^3}{p^2}$ $\mathbb{F}_{q^2}$-rational places of $G_b$. From Result \ref{lem15042023}, $Z$ has order at most $\frac{q^3}{p^2}$ whence $Z=V$ follows.

From \cite[Theorem 12.11]{HKT}, $\aut(G_b)$ fixes  $P_\infty$. Let $\alpha\in\aut(G_b)$ be a generator of a Sylow $u$-subgroup $S_u$ of $\aut(G_b)$ with $u\ne p$. Since $G_b$ has as many as $\mathbb{F}_{q^2}$-rational places other than that centered at $P_\infty$, it turns out that $\alpha$ fixes an $\mathbb{F}_{q^2}$-rational place centered at an affine point. As $V$ is transitive on these places, we may assume up to conjugacy under an element $\beta\in S_p$ that $\alpha$ fixes the place $O$. Furthermore,  $P_\infty$ is the unique pole of $\xi$ and it has multiplicity $\frac{q}{p^2}$. From \cite[Lemma 11.13]{HKT}, $\alpha(\xi)=w\xi+v$. Here $v=0$ as $\alpha$ fixes the place $O$. 
From (III), $P_\infty$ the pole numbers smaller than equal to $q+1$ are $p^{h-2},2p^{h-2},\ldots,p^2(p^{h-2})=q,\,p^h+1=q+1$, a base of the Riemann-Roch space $\mathcal{L}((q+1)P_\infty)$ consists of  $1,\xi,\xi^2,\ldots \xi^{p^2},\rho$. Therefore $$\alpha(\rho)=c_1\xi+\ldots+c_{p^2}\xi^{p^2}+c\rho.$$

The zeros of $\xi$ are the places centered at the affine points $(0,t_j)$ of $G_b$ with $j=1,\ldots,p^{h-2}$ such that  $\sum_{i=1}^{h-1}(b-b^{p^i})t_j^{p^{i-1}}=0$. From the classical Vi\'ete formula,

$$t_1+t_2+\ldots t_{p^{h-2}}=-\frac{b-b^{p^{h-1}}}{b-b^{p^{h-2}}}.$$

Since $\alpha(\xi)=w\xi$, $\alpha$ takes zeros of $\xi$ to zeros of $\xi$, it follows that $\sum_{i=1}^{h-1}(b-b^{p^i})(ct_j)^{p^{i-1}}=0$ for $j=1,\ldots,p^{h-2}$. Therefore,

$$t_1+t_2+\ldots t_{p^{h-2}}=-\frac{b-b^{p^{h-1}}}{b-b^{p^{h-2}}}\cdot\frac{c^{p^{h-2}}}{c^{p^{h-3}}}.$$

\noindent Comparison shows that $c^{p-1}=1$, that is, $c\in \mathbb{F}_{p}^*$.

From (\ref{eq80323}),
$$\sum_{i=1}^{h-1}(b-b^{p^i})\alpha(\rho)^{p^{i-1}}+\go \alpha(\xi)^{q+1}=0,$$
whence
$$\sum_{i=1}^{h-1}(b-b^{p^i})(c_1\xi+\ldots+c_{p^2}\xi^{p^2})^{p^{i-1}}+\sum_{i=1}^{h-1}(b-b^{p^i})(c\rho)^{p^{i-1}}+\go w^{q+1}\xi^{q+1}=0.$$
Thus
$$\sum_{i=1}^{h-1}(b-b^{p^i})(c_1\xi+\ldots+c_{p^2}\xi^{p^2})^{p^{i-1}}+c\sum_{i=1}^{h-1}(b-b^{p^i})(\rho)^{p^{i-1}}+\go w^{q+1}\xi^{q+1}=0.$$

Now, from (\ref{eq80323}),
$$\sum_{i=1}^{h-1}(b-b^{p^i})(c_1\xi+\ldots+c_{p^2}\xi^{p^2})^{p^{i-1}}+\go (w^{q+1}-c)\xi^{q+1}=0.$$
Since $\xi$ is a transcendent element of $G_b$, this yields that the polynomial
$$G(Z)=\sum_{i=1}^{h-1}(b-b^{p^i})(c_1Z+\ldots+c_{p^2}Z^{p^2})^{p^{i-1}}+\go (w^{q+1}-c)Z^{q+1}$$ is the zero polynomial. Therefore, $w^{q+1}=c$, that is,
\begin{equation}
\label{eqB7032023}
\begin{cases}
\xi'=w \xi\\
\rho'=c \rho
\end{cases}
\end{equation}
which together with (\ref{eq80323}) yield $\alpha\in \Lambda$. Therefore, a conjugate of $S_u$ under an element $\beta\in S_p$ is in $\Lambda$. It turns out that $S_u$ is a subgroup of $S_p\rtimes \Lambda$. Thus claim (IV) follows as $G_b=S_p\rtimes C$ with a cyclic complement $C$ and any cyclic group is the direct product of its Sylow subgroups.

(V) Let $G_{\bar{b}}=\mathbb{F}_{q^2}(\bar{\xi},\bar{\rho})$ with
\begin{equation}
\label{eqA80323}
\sum_{i=1}^{h-1}(\bar{b}-\bar{b}^{p^i})\bar{\rho}^{p^{i-1}}+\go \bar{\xi}^{q+1}=0
\end{equation}
where $q=p^h$. Let $\bar{P}_\infty$ and $\bar{O}$ denote its places centered at the unique point at infinity and at the origin, respectively.

Assume that $G_b$ and $G_{\bar{b}}$ are $\mathbb{F}_{q^2}$-isomorphic. Let $\iota$ be an associated birational map $G_b\mapsto G_{\bar{b}}$ defined over $\mathbb{F}_{q^2}$. Then $\iota$ takes $P_\infty$ to $\bar{P}_\infty$ as the unique fixed place of $\aut(G_b)$ (resp. $\aut(G_{\bar{b}})$) are $P_\infty$ (resp. $\bar{P}_\infty$). Furthermore,
$\iota$ takes $O$ to an $\mathbb{F}_{q^2}$-rational place $\bar{V}$ of $G_{\bar{b}}$. If $\bar{\alpha}\in\aut(G_{\bar{b}})$ takes $\bar{V}$ to $\bar{O}$ then replacing $\iota$ by $\bar{\alpha}\circ\iota$ gives a birational map which takes $O$ to $\bar{O}$. Since $\div(\rho)=(q+1)O-(q+1)P_\infty$ we also have
$\div(\iota(\rho))=(q+1)\bar{O}-(q+1)\bar{P}_\infty$. Therefore, $\iota(\rho)/\bar{\rho}$ has neither zeros nor  poles. Thus $\iota(\rho)=c\bar{\rho}$ with $c\in \mathbb{F}_{q^2}^*$. Furthermore,  both $\bar{\xi}$ and $\iota(\xi)$ have the smallest pole number at $P_\infty$ and $\bar{P}_\infty$, respectively. Therefore $\iota(\xi)=\sigma \bar{\xi}$ for some $\sigma \in \mathbb{F}_{q^2}^*$. Let $\delta=\sigma^{q+1}$. From  (\ref{eq80323}),
\begin{equation}
\label{eqA100323}
\sum_{i=1}^{h-1}(b-b^{p^i})c^{p^{i-1}}\rho^{p^{i-1}}+\go \delta \xi^{q+1}=0.
\end{equation}
Comparison with (\ref{eqA80323}) shows that
\begin{equation}
\label{eqB100323} \delta(\bar{b}-\bar{b}^{p^i})=c^{p^{i-1}}(b-b^{p^i}) {\mbox{\,\,for $i=1,\ldots,h-1$.}}
\end{equation}
Since $b,\bar{b},\delta\in \mathbb{F}_q^*$, (\ref{eqB100323}) for $i=1$ yields $c\in \mathbb{F}_q^*$.
Conversely, if there exist $c,\delta\in \mathbb{F}_q^*$ such that (\ref{eqB100323}) holds then the rational map $\iota: G_b \mapsto G_{\bar{b}}$ where $\iota(\rho)=c\bar{\rho}$ and $\iota(\xi)=\sigma\bar{\xi}$  with $\sigma^{q+1}=\delta$ is an $\mathbb{F}_{q^2}$-isomorphism.
Three cases are treated separately.
\subsubsection{Case $b^{p^2}-b=0$} For $h=2$, $b$ and $\bar{b}$ are linearly independent over $\mathbb{F}_p$ so that $\bar{b}=\alpha b+ \beta$ with $\alpha,\beta\in \mathbb{F}_p$ and $\alpha\neq 0$. Then (\ref{eqB100323}) holds for $i=1=h-1$, and thus $G_b$ and $G_{\bar{b}}$ are $\mathbb{F}_{q^2}$-isomorphic. Assume $h\ge 3$. If $G_b$ and $G_{\bar{b}}$ are $\mathbb{F}_{q^2}$-isomorphic then (\ref{eqB100323}) for $i=2$ yields $\bar{b}^{p^2}-\bar{b}=0$. Then both $b$ and $\bar{b}$ are elements of the (unique) subfield $\mathbb{F}_{p^2}$ of $\mathbb{F}_q$. Conversely, if $b,\bar{b}\in \mathbb{F}_{p^2}$ then they are linearly independent over $\mathbb{F}_p$ and hence
$\bar{b}=\alpha b+ \beta$ with $\alpha,\beta\in \mathbb{F}_p$ and $\alpha\neq 0$. Therefore, (\ref{eqB100323}) holds for $=1,2,\ldots h-1$, and hence $G_b$ and $G_{\bar{b}}$ are $\mathbb{F}_{q^2}$-isomorphic.

\subsubsection{Case $b^{p^3}-b=0$}  For $h=3$, $b,\bar{b}\in \mathbb{F}_{p^3}\setminus \mathbb{F}_{p}$ and hence (\ref{eqA040423}) holds. A straightforward computation shows that then (\ref{eqB100323}) holds for $i=1,2=h-1$. Therefore, $G_b$ and $G_{\bar{b}}$ are $\mathbb{F}_{q^2}$-isomorphic. Assume $h\ge 4$. For $i=1,2$, (\ref{eqB100323}) reads
\begin{equation}
\label{eqA230323J} \delta(\bar{b}-\bar{b}^{p})=c(b-b^{p}).
\end{equation}
and
\begin{equation}
\label{eqB230323} \delta(\bar{b}-\bar{b}^{p^2})=c^p(b-b^{p^2}),
\end{equation}
respectively. Since $\bar{b}^{p^3}-\bar{b}=0$ also holds, comparison of (\ref{eqA230323J}) with the $p$-power of (\ref{eqB230323}) yields
\begin{equation}
    \label{E230323}
    \delta^{p-1}=c^{p^2-1}.
\end{equation}
Conversely, take  $c\in \mathbb{F}_q$ so that \begin{equation}
    \label{eqA290323K}
    c^p=\frac{b-b^p}{\bar{b}-\bar{b}^p}.
\end{equation}
and define $\delta=c^{p+1}$. Then (\ref{E230323}) holds whence (\ref{eqB230323}) follows. Therefore, $G_b$ and $G_{\bar{b}}$ are $\mathbb{F}_{q^2}$-isomorphic.

\subsubsection{Case $b^{p^i}-b=0$ for some $4\le i\le h-1$ but $b^{p^i}-b \neq 0$ for $1\le i\le 3$.}
Assume first (\ref{eqA040423}) holds. Then
\begin{equation}
\label{eqB0030423}
\gamma b\bar{b}+\alpha b+ \delta \bar{b}+\beta=0,\,\alpha,\beta,\gamma \in \mathbb{F}_p.
\end{equation}
Since $G_b$ and $G_{\epsilon b+ \chi}$ are $\mathbb{F}_{q^2}$-isomorphic for $\epsilon,\chi \in \mathbb{F}_p$ with $\epsilon \neq 0$,  we may replace $b$ with $\epsilon b+\chi$ and $\bar{b}$ with $\bar{\epsilon}\bar{b}+\bar{\chi}$ where $\epsilon\bar{\epsilon}=(\alpha \delta -\beta\gamma)$. Then (\ref{eqB0030423}) reduces to $b\bar{b}=1$. Take $b^{-p}$ for $c$, and define $\delta$ to be $-b$. Then (\ref{eqB230323}) holds for $i=1,2,\ldots, h-1$. Therefore, with these choices of $c$ and $\delta$, the map $\iota$ is a an $\mathbb{F}_q$-isomorphism from $G_b$ to $G_{\bar{b}}$ with $\bar{b}=b^{-1}$.

Assume that $G_b$ and $G_{\bar{b}}$ are $\mathbb{F}_{q^2}$-isomorphic. Comparison of (\ref{eqB100323}) for $i=1$ with (\ref{eqB100323})  for $i=2$ gives
   \begin{equation}
       \label{para290323}
       (\bar{b}-\bar{b}^{p^2})(b-b^p)=c^{p-1}(b-b^{p^2})(\bar{b}-\bar{b}^p).
   \end{equation}
 Similarly, comparison of (\ref{eqB100323}) for $i=1$ with (\ref{eqB100323}) for $i=3$ gives
 \begin{equation}
       \label{parb290323}
       (\bar{b}-\bar{b}^{p^3})(b-b^p)=c^{p^2-1}(b-b^{p^3})(\bar{b}-\bar{b}^p).
\end{equation}
 Moreover, comparison of the $(p+1)$-power of (\ref{para290323}) with (\ref{parb290323}) shows
 \begin{equation}
     \label{eqcar}
     (\bar{b}-\bar{b}^{p^3})(\bar{b}-\bar{b}^p)^p(b-b^{p^2})^{p+1}=(b-b^{p^3})(b-b^p)^p(\bar{b}-\bar{b}^{p^2})^{p+1}.
 \end{equation}
 Therefore (\ref{eqA040423}) follows from Lemma \ref{lem060423}.

(VI) The claim is a corollary to (V).
\end{proof}

\section{Galois subcovers of $\mathbb{F}_{q^2}(\cH)$ with respect to subgroup of type $\mathfrak{V}$}
This time, take $\mathbb{F}_{q^2}(\cH_q)$ in its canonical form $\mathbb{F}_{q^2}(x,y)$ with $y^q+y-x^{q+1}=0$. 
The group $\Phi=\langle \psi_{1,\nicefrac{1}{2},1}\rangle$ has order $p$, and it is not contained in $Z(S_p)$. Let $\xi=x^p-x$ and $\eta=y-\ha x^2$. A straightforward computation shows that $\psi_{1,\nicefrac{1}{2},1}(\xi)=\xi$ and $\psi_{1,\nicefrac{1}{2},1}(\eta)=\eta$. Moreover, 
$$y^q+y-x^{q+1}=\eta^q+\ha x^{2q}+\eta+\ha x^2-x^{q+1}=\eta^q+\eta+\ha x^{2q}+\ha x^2-x^{q+1}=\eta^q+\eta+\ha (x^q-x)^2.$$
Since $Tr(\xi)=x^q-x$, this gives
$$\eta^q+\eta+\ha(x^q-x)^2=\eta^q+\eta+\ha Tr(\xi)^2.$$
Therefore, the Galois subcover $\mathbb{F}_{q^2}(\cF')$ of $\mathbb{F}_{q^2}(\cH_q)$ with respect to $\Phi$ is 
$\mathbb{F}_{q^2}(\xi,\eta)$ with 
\begin{equation}
\label{eqiib}
\eta^q+\eta+\ha \Big(\sum_{i=1}^{h}\xi^{p^{i-1}}\Big)^2=0.
\end{equation}
In particular, this shows that $\mathbb{F}_{q^2}(\cF')$ is $\mathbb{F}_{q^2}$-isomorphic to (ii) of Result \ref{ckt}.

Fix $b$ such that $b^q+b=0$. Then $\psi_{0,b,1}$ commutes with $\psi_{0,\nicefrac{1}{2},1}$, and hence $\psi_{0,b,1}$ induces an $\mathbb{F}_q$-automorphism $\varphi$ of $\Fs(\cF')$. A straightforward computation shows that the map $\varphi:(\xi,\eta)\mapsto (\xi,\eta+b)$ is this $\fq$-automorphism of $\Fs(\cF')$. Let $\Phi_{b}$ be the  $\fq$-automorphism group of $\cF'$ generated by $\varphi$. Then the Galois subcover of $\Fs(\cH_q)$ with respect to $\langle \psi_{1,\nicefrac{1}{2},1}, \psi_{0,b,1}\rangle$ is the Galois subcover $G_b$ of $\Fs(\cF')$ with respect to $\Phi_b$.
 \begin{theorem}
\label{propiib} Let $b^q+b=0$. Then the Galois subcover $G_b$ of $\Fs(\cF')$ with respect to $\Phi_{b}$ has the following properties:
\begin{itemize}
\item[(I)] $G_b=\Fs(\xi,\rho)$ with
\begin{equation}
\label{eq80415}
\Big(\sum_{i=1}^{h}\xi^{p^{i-1}}\Big)^2-2b\sum_{i=1}^{h}\rho^{p^{i-1}}=0
\end{equation}
where $q=p^h$.
 \item[(II)] The genus of $G_b$ equals $\frac{1}{2}p^{h-1}(p^{h-1}-1)=\frac{1}{2}\frac{q}{p}(\frac{q}{p}-1)$.
\item[(III)] Let $P_\infty$ be the unique point of $G_b$ at infinity. The Weierstrass semigroup at the unique place centered at $P_\infty$ is generated by $\frac{q}{p}, \frac{q}{p}+\frac{q}{p^2}$ and $q+1$.
\item[(IV)] The $\mathbb{F}_{q^2}$-automorphism group of $G_b$ is the semidirect product of a normal subgroup of order $\frac{q^2}{p}$ by a (cyclic) complement of order $p-1$.
\item[(V)] Let $\bar{b}\in \mathbb{F}_q\setminus \mathbb{F}_p$. Then $G_b$ and $G_{\bar{b}}$ are $\mathbb{F}_{q^2}$-isomorphic if and only if there exists $\kappa\in \mathbb{F}_{p}^*$ such that $\bar{b}=\kappa b$.
 \end{itemize}
\end{theorem}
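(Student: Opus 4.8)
The proof is organised along the lines of Theorem~\ref{propiia}, in five steps.

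\emph{Parts (I) and (II).} For (I) the plan is an Artin--Schreier descent. Since $\varphi\colon(\xi,\eta)\mapsto(\xi,\eta+b)$ with $b^q+b=0$, I would put $\rho=b^{-p}\eta^p-b^{-1}\eta$; one checks at once that $\varphi(\rho)=\rho$, so $\Fs(\xi,\rho)$ is contained in the fixed field $F$ of $\Phi_b$, and since $\eta$ is a root of $T^p-b^{p-1}T-b^p\rho\in\Fs(\xi,\rho)[T]$ one has $[\Fs(\cF'):\Fs(\xi,\rho)]\le p$; as $p$ is prime and $F\neq\Fs(\cF')$ this forces $F=\Fs(\xi,\rho)=G_b$. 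The defining equation then drops out by a telescoping: applying the Frobenius, $\rho^{p^{i-1}}=b^{-p^i}\eta^{p^i}-b^{-p^{i-1}}\eta^{p^{i-1}}$, hence $\sum_{i=1}^h\rho^{p^{i-1}}=b^{-q}\eta^q-b^{-1}\eta=-\tfrac1b(\eta^q+\eta)$, using $b^q=-b$; substituting $\eta^q+\eta=-\ha(\sum_{i=1}^h\xi^{p^{i-1}})^2$ from (\ref{eqiib}) yields (\ref{eq80415}). Part (II) is then immediate from Result~\ref{gsx1}, applied with $\Phi\cap Z(S_p)=\langle\psi_{0,b,1}\rangle$ of order $p$.

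\emph{Part (III).} Since $\cH_q$ has zero Hasse--Witt invariant (Result~\ref{zeroprank}), every non-trivial element of $\Phi\le S_p$ fixes $Y_\infty$ and nothing else (Result~\ref{resth11.129}), so $Y_\infty$ is totally ramified over $P_\infty$ with ramification index $|\Phi|=p^2$; consequently a positive integer $n$ is a pole number at $P_\infty$ exactly when $np^2$ is the pole order at $Y_\infty$ of some $\Phi$-invariant function. The function $\xi$ has pole order $pq$ at $Y_\infty$, giving the pole number $q/p$; the norm $\prod_{\sigma\in\Phi}\sigma(y)$ has pole order $p^2(q+1)$, giving $q+1$; and a $\Phi$-invariant function of pole order $q(p+1)$ at $Y_\infty$ --- whose leading term is necessarily $x^{p+1}$ and which is produced by correcting $x^{p+1}$ with terms of smaller pole order (alternatively this can be read from an HKT lemma of the type used in Theorem~\ref{propiia}(III)) --- gives $q/p+q/p^2$. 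Finally the numerical semigroup $\langle q/p,\,q/p+q/p^2,\,q+1\rangle$ is telescopic, and Result~\ref{resgeneresemigroup} gives its genus as $\ha\tfrac qp(\tfrac qp-1)=\mathfrak{g}(G_b)$; since the Weierstrass semigroup at $P_\infty$ contains it and has the same number of gaps, the two coincide.

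\emph{Part (IV).} First I would exhibit the automorphisms. For $a\in\Fs$ pick $v$ with $v^q-v+\go a^{q+1}=0$; the Hermitian translation $\psi_{a,v,1}$ descends to an $\Fs$-automorphism of $G_b$ acting by $\xi\mapsto\xi+a$ and $\rho\mapsto\rho+(\text{additive polynomial in }\xi)+(\text{constant})$, and these form a $p$-group $V$ of order $q^2/p$ which fixes $P_\infty$ and is transitive on the affine $\Fs$-rational places. The scalings $(\xi,\rho)\mapsto(\lambda\xi,\lambda^2\rho)$, $\lambda\in\mathbb{F}_p^*$, form a cyclic group $\Lambda$ of order $p-1$ normalising $V$, and $W=V\rtimes\Lambda$. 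To prove $W=\aut(G_b)$: as $G_b$ is $\Fs$-maximal with zero Hasse--Witt, every $p$-subgroup of $\aut(G_b)$ has a unique fixed point and, by Result~\ref{lem15042023}, order dividing $N-1$ with $N=q^2+2\mathfrak{g}(G_b)q+1$, and computing the $p$-part of $N-1$ shows $|V|=q^2/p$ is already that $p$-part, so no $p$-group properly contains $V$; then $\aut(G_b)$ fixes $P_\infty$ by \cite[Theorem~12.11]{HKT}, and a prime-to-$p$ automorphism, conjugated by an element of $V$ so as to also fix an affine place, is constrained by the Weierstrass semigroup of (III) to act on the relevant Riemann--Roch spaces by $\xi\mapsto w\xi$ and $\rho\mapsto(\text{polynomial in }\xi)+c\rho$; substituting into (\ref{eq80415}) and comparing coefficients (a Vi\`ete-type argument as in Theorem~\ref{propiia}(IV)) forces it into $\Lambda$. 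Hence $\aut(G_b)=W$.

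\emph{Part (V).} The \emph{if} direction is immediate: if $\bar b=\kappa b$ with $\kappa\in\mathbb{F}_p^*$ then $\psi_{0,\bar b,1}=\psi_{0,b,1}^{\kappa}$ generates the same cyclic group as $\psi_{0,b,1}$, so $\langle\psi_{1,1/2,1},\psi_{0,\bar b,1}\rangle=\langle\psi_{1,1/2,1},\psi_{0,b,1}\rangle$ and $G_{\bar b}=G_b$. Conversely, an $\Fs$-isomorphism $\iota\colon G_b\to G_{\bar b}$ carries $P_\infty$ to $\bar P_\infty$, the unique fixed place of the automorphism group; after composing $\iota$ with a suitable automorphism of $G_{\bar b}$ and using that $q/p$ is the least positive pole number, one gets $\iota(\xi)=\sigma\bar\xi$ for some $\sigma\in\Fs^*$ together with a prescribed expression for $\iota(\rho)$ in terms of $\bar\xi$ and $\bar\rho$; substituting these into (\ref{eq80415}) and comparing with the defining equation of $G_{\bar b}$ yields relations on $\sigma$ and the coefficients that force $\bar b/b\in\mathbb{F}_p^*$. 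The step I expect to be the main obstacle is the second half of (IV): constructing $V$ and $\Lambda$ is routine, but ruling out further automorphisms needs both the maximality of $V$ among $p$-subgroups and the delicate Riemann--Roch bookkeeping for a prime-to-$p$ automorphism over the three-generator Weierstrass semigroup of (III); a secondary technical point is the explicit construction in (III) of the $\Phi$-invariant function of pole order $q(p+1)$ at $Y_\infty$.
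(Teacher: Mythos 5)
Your parts (I), (II) and (V) follow the paper's own route (for (I) your $\rho=b^{-p}\eta^p-b^{-1}\eta$ is exactly the paper's $\tau^p-\tau$ with $\tau=b^{-1}\eta$, and your telescoping is the paper's elimination), and your part (III) is a legitimate variant: the paper constructs the non-gaps $\frac{q}{p}$ and $\frac{q}{p}+\frac{q}{p^2}$ directly on $G_b$ (the first via a commutator-subgroup argument, the second as $\rho-\kappa\xi^2$ with $\kappa^q=\frac{1}{2b^p}$), whereas you lift to $\cH_q$ and look for $\Phi$-invariant functions with pole only at $Y_\infty$; your unproven ``correction of $x^{p+1}$'' does exist (it is precisely $\rho-\kappa\xi^2$ read upstairs), and your closing step --- telescopic semigroup of genus equal to $\mathfrak{g}(G_b)$ forces equality with the Weierstrass semigroup --- is the same as the paper's and in fact lets you skip the paper's argument that $\frac{q}{p}$ is the \emph{smallest} non-gap.

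The genuine gap is in (IV), and it is exactly where you expected trouble. First, your description of $V$ is wrong: not every Hermitian translation $\psi_{a,v,1}$ normalizes $\Phi=\langle\psi_{1,\nicefrac{1}{2},1},\psi_{0,b,1}\rangle$ (one needs $a^q-a\in\mathbb{F}_p b$), and the condition $v^q-v+\go a^{q+1}=0$ belongs to the other canonical form; more importantly, $V$ of order $\frac{q^2}{p}$ is \emph{not} transitive on the affine $\mathbb{F}_{q^2}$-rational places, of which there are $q^2+\frac{q^3}{p^2}-\frac{q^2}{p}\gg\frac{q^2}{p}$ (the paper shows the $\aut(G_b)$-orbit of $O$ has length only $\frac{q^2}{p}$ and there are $1+(\frac{q}{p}-1)/(p-1)$ further long orbits). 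Consequently your key reduction --- ``a prime-to-$p$ automorphism, conjugated by an element of $V$, fixes an affine place, indeed $O$'' --- is unjustified on both counts: the fixed-point count used in Theorem \ref{propiia} relied on the number of affine rational places being a power of $p$, which fails here, and even granted a fixed affine place you cannot move it to $O$ by $V$. The paper avoids this entirely: it exhibits the characteristic subgroup $\Omega=\Gamma\times\Delta$ of $\Psi$ of order $(\frac{q}{p})^2>2\mathfrak{g}(G_b)$, so that $G_b/\Omega$ is rational, embeds $\aut(G_b)/\Omega$ into a subgroup of $\PGL(2,\mathbb{F}_{q^2})$ fixing a place, and bounds it by $p(p-1)$ via Dickson's classification, giving $|\Lambda|=p-1$. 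Your Riemann--Roch comparison would work once an automorphism is known to fix both $P_\infty$ and $O$ (and can be salvaged, e.g.\ by observing that a cyclic complement containing $\Lambda$ must permute the fixed places of $\Lambda$), but as written the reduction to that situation is missing.
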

\begin{proof}
(I) We show first that the fixed field $F$ of $\Phi_{\nicefrac{1}{2}}$ is generated by $\xi$ together with
\begin{equation}
\label{eq1iib}
\zeta=\eta^p-b^{p-1}\eta.
\end{equation}
Since $\varphi(\xi)=\xi$ and
$$\varphi(\zeta)=\varphi(\eta)^p-b^{p-1}\varphi(\eta)=
\eta^p-b^{p-1}\eta=\zeta,$$
we have $\Fs(\xi,\zeta)\subseteq F$. Furthermore, $[\Fs(\cF'):\Fs(\xi,\zeta)]=p$. Since $p$ is prime, this yields either $\Fs(\xi,\zeta)=F$ or $F=\Fs(\cF')$. The latter case cannot actually occur, and hence $F=\Fs(\xi,\zeta)$. Therefore $F=G_b$.
We have to eliminate $\eta$ from equations (\ref{eqiib}) and (\ref{eq1iib}). Let
$\tau=b^{-1}\eta.$ Replacing $\eta$ by $b\tau$ in (\ref{eq1iib}) gives
\begin{equation}
\label{eq2iib} \frac{\zeta}{b^p}=\tau^p-\tau.
\end{equation}
Let $\rho=\tau^p-\tau$. Then $$\rho+\rho^p+\ldots+\rho^{p^{h-1}}=\tau^{p^{h}}-\tau.$$
Furthermore, from $\eta=b\tau$,
$$\eta^{p^{h}}+\eta=b^{p^{h}}\tau^{p^{h}}+b\tau=-b\tau^{p^{h}}+b\tau=-b(\tau^{p^{h}}-\tau).$$
Therefore
$$\eta^{p^{h}}+\eta=-b(\rho+\rho^p+\ldots+\rho^{p^{h-1}})$$
whence (\ref{eqiib}) follows.

(II) The claim follows from \cite{GSX}.


(III) We go on with some preliminary results on $\aut(G_b)$. Let $a,c\in \mathbb{F}_{q^2}$ and $\nu_a\in \mathbb{F}_p$.  A straightforward computation shows that the map
$\psi_{a,c}$
\begin{equation}
\label{eq15apr2023}
\begin{cases} \xi':=\xi+a\\
              \rho':=\rho+\nu_a \xi+c
\end{cases}
\end{equation}
is an $\mathbb{F}_{q^2}$-automorphism of $G_b$ if and only if $Tr(a)=\nu_a b$ and $Tr(a)^2 - 2bTr(c)=0$.  We show that there exist as many as $\frac{q^2}{p}$ such maps $\psi_{a,c}$. For any element $e\in \mathbb{F}_{q^2}$, consider its trace $Tr(e)$ over $\mathbb{F}_p$, i.e.  $Tr(e)=e+e^p+\ldots+e^{q/p}+e^q+\ldots +e^{\nicefrac{q^2}{p}}$. Observe that $T(e)=Tr(e)^q+Tr(e)$. Let $E$ be the additive subgroup of $\mathbb{F}_{q^2}$ consisting of all elements $e$ with $Tr(e)=0$. Clearly, $E$ has order $\frac{q^2}{p}$. The map $f: E\mapsto \mathbb{F}_{q^2}$ taking $e\in E$ to $Tr(e)$ is additive and its value set is contained in the additive subgroup of all elements $u$ such that $u^q+u=0$. More precisely, $f$ is surjective, as its nucleus comprises $\frac{q}{p}$  elements $e$ with $Tr(e)=0$. Since $b^q+b=0$, this yields that Equation $Tr(e)=b$ has exactly $\frac{q}{p}$  solutions.
Thus there exist as many as $q$ pairs $(a,\nu_a)$ with $\nu_a\in \mathbb{F}_p^*$ such that $Tr(a)=\nu_a b$. For each such pair $(a,\nu_a)$ and for each $\delta\in \mathbb{F}_{q^2}^*$ with $Tr(\delta)=0$, let $c=\ha (\nu_a a-\delta)$. Then $Tr(c)=\ha \nu_a Tr(a)$ whence $Tr(a)^2-2bTr(c)=0$. Since $Tr(\delta)=0$ has $\frac{q}{p}$  solutions, the claim follows. Thus the above maps $\psi_{a,c}$ form an elementary abelian subgroup $\Psi$ of $\aut(G_b)$ of order $\frac{q^2}{p}$. Actually, by  Lemma \ref{lem15042023}, $\Psi$ is the (unique) Sylow $p$-subgroup of $\aut(\cF')$. 
The commutator subgroup $\Psi'$ of $\Psi$ coincides with $\Gamma$.
In fact, any commutator $\psi_{a,c}^{-1}\psi_{\bar{a},\bar{c}}^{-1}\psi_{a,c}\psi_{\bar{a},\bar{c}}$ fixes $\xi$ and hence it belongs to $\Gamma$, and any element of $\Gamma$ can be obtained in that way.
Two elementary abelian subgroups of $\Psi$ play a role in the structure of $\Psi$, namely $\Gamma=\{\psi_{0,c}| Tr(c)=0\}$ and $\Delta=\{\psi_{a,0}|Tr(a)=0\}$. Here, $|\Gamma|=|\Delta|=\frac{q}{p}$, and  
$\Omega=\Gamma\Delta=\Gamma\times \Delta$ consisting of all $\psi_{a,c}$ with $\nu_a=0$. Also,   $\psi_{a,c}$ and $\psi_{\bar{a},\bar{c}}$ commute if and only if $\nu_a \bar{a}=\nu_{\bar{a}}a$. The latter condition occurs when either $a=\nu_a=0$, or $\bar{a}=\nu_{\bar{a}}=0$, or $\nu_a=\nu_{\bar{a}}=0$, or  
$\bar{a}=\nu a$ for some $\nu\in \mathbb{F}_p^*$. In particular, $Z(\Phi)=\Gamma$. It turns out that
the centralizer $C_{\Psi}(\psi_{a,c})$ has order either $\frac{q^2}{p}$, or $(\frac{q}{p})^2$, or $q$ according as $\psi_{a,c}\in \Gamma$, or $\psi_{a,c}\in \Omega\setminus\Delta$, or $\psi_{a,c}\in \Psi\setminus \Omega$. From this, each group-automorphism of $\Psi$ leaves $\Omega$ invariant, i.e. $\Omega$ is a characteristic subgroup of $\Psi$. Moreover, the commutators of $\Psi$ are exactly the elements of $\Gamma$, and hence $\Gamma$ coincides with the commutator subgroup $\Psi'$ of $\Psi$.

Let $\zeta\in \mathbb{Z}$ be the smallest non-gap of $G_b$ at $P_\infty$ and take $m\in G_b$ such that $\div(m)_\infty =\zeta P_\infty$. Since $P_\infty$ is fixed by $\Psi$, $\mathbb{K}(m)$ is left invariant by $\Psi$. Thus,  $\Psi$ induces an $\mathbb{F}_{q^2}$-automorphism group on $\mathbb{F}_{q^2}(m)$. Let $\Xi$ be the subgroup of $\Psi$ fixing $m$. Since $\Psi$ is a $p$-group, $\Psi/\Xi$ is an elementary abelian group, 
by Dickson's classification of subgroups of $\PGL(2,q)$. Then $\Xi$ contains the commutator subgroup $\Psi'$ of $\Psi$; see for instance From  \cite[Section I, Satz 8.2]{huppertI1967}. As we have already pointed out, $\Psi'=\Gamma$. Thus $\Gamma \le \Xi$.
Since the fixed subfield of $\Gamma$ in $G_b$ is $\mathbb{F}_{q^2}(\xi)$, this yields that $\mathbb{F}_{q^2}(\xi)$ contains $\mathbb{F}_{q^2}(m)$. Then $[G_b:\mathbb{F}_{q^2}(m)]\geq [G_b:\mathbb{F}_{q^2}(\xi)]$ whence $\div(\xi)_\infty=\zeta P_\infty$. Thus $\xi$ may be chosen for $m$, and the smallest non-gap at $P_\infty$ is equal to $\frac{q}{p}$.
We show that $\frac{q}{p}+\frac{q}{p^2}$ is another non-gap of $G_b$ at $P_\infty$. More precisely, we show that $\tau=\rho-\kappa\xi^2$ for $\kappa^q=\ha \frac{1}{b^p}\in \mathbb{F}_{q^2}$ has a (unique) pole at $P_\infty$ of multiplicity $\frac{q}{p}+\frac{q}{p^2}$. Replace $\rho$ with $\tau+\kappa \xi^2$ in (\ref{eq80415}).
From the choice of $k$ there exists a polynomial $h(X)\in \mathbb{F}_{q^2}[X]$ of degree $\frac{q}{p}+\frac{q}{p^2}$ such that
\begin{equation}
\label{eq200423}
h(\xi)-2b\sum_{i=1}^{h}\tau^{p^{i-1}}=0.
\end{equation}
Let $\cC$ be the plane curve associated to $G_b$ i.e. $\cC$ has equation
$$\Big(\sum_{i=1}^{h}X^{p^{i-1}}\Big)^2-2b\sum_{i=1}^{h}Y^{p^{i-1}}=0.$$
Let $\cD$ be the plane curve of equation
$$H(X,Y)=h(X)-2b\sum_{i=1}^{h}Y^{p^{i-1}}\in \mathbb{F}_{q^2}[X,Y].$$
Clearly $H(\xi,\tau)=0$. Moreover, $\cD$ is the birational image of $\cC$ by the map $(X,Y)\mapsto (X,Y-k X^2)$ which fixes the place $P_\infty$.  Since $\deg_X(H(X,Y))>\deg_Y(H(X,Y))$, $\cD$ has a unique place centered at a point at infinity, namely $P_\infty$. Therefore, $P_\infty$ is the unique pole of $\tau$, and the relative pole number equals $\deg_X(H(X,Y))=\frac{q}{p}+\frac{q}{p^2}$.
Thus $\frac{q}{p}+\frac{q^2}{p}$  is a non-gap of $G_b$ at $P_\infty$. Therefore, $\frac{q}{p},\frac{q}{p}+\frac{q^2}{p}$ and $q+1$ are non-gaps of $G_b$ at $P_\infty$. 

Let $d_0=0,\, d_1=\frac{q}{p},\, d_2=\frac{q}{p^2},\, d_3=1$, and $A_1=\{1\},\, A_2=\{1,p\},$ $A_3=\{\frac{q}{p},\frac{q}{p}+\frac{q}{p^2},q+1\}$. Then  $1\in S_1, p+1\in S_2$ and $q+1\in S_3$. Thus 
the sequence  $\{\frac{q}{p},\frac{q}{p}+\frac{q}{p^2},q+1\}$ is telescopic. 
 Furthermore, $$l_g(S_3)=-\frac{q}{p}+(p-1)\Big(\frac{q}{p}+\frac{q}{p^2}\Big)+\Big(\frac{q}{p^2}-1\Big)(q+1)=\frac{q^2}{p^2}-\frac{q}{p}-1.$$  Now, Result \ref{resgeneresemigroup} yields (III)



(IV) 
By \cite[Theorem 11.49 (ii)(b)]{HKT}, $\aut(G_b)=\Psi\rtimes \Lambda$ with a cyclic subgroup $\Lambda$. For $\lambda \in \mathbb{F}_p^*$, the map $\tau_\lambda$
\begin{equation}
\label{eq17042023}
\begin{cases}
\xi':=\lambda \xi\\
\rho':=\lambda^2\rho
\end{cases}
\end{equation}
is an $\mathbb{F}_{q^2}$-automorphism of $G_b$. 
We show that $|\Lambda|=p-1$, that is, $\Lambda=\{\tau_\lambda|\lambda\in \mathbb{F}_p^*\}$. Let $\bar{G}_b$ the fixed subfield  of $\Omega$ in $G_b$. From $|\Omega|=(\frac{q}{p})^2>(\frac{q}{p})(\frac{q}{p}-1)=2\mathfrak{g}(G_b)$, $\bar{G}_b$ is rational; see \cite[Theorem 11.78]{HKT}. Since $\Omega$ is a characteristic subgroup of $\Psi$, the quotient group $\bar{\Omega}=\aut(G_b)/\Omega$ is a subgroup of $\aut(\bar{G}_b)$. 

From \cite[Theorem 11.49 (b)]{HKT}, $\bar{\Omega}\cong \bar{U}\rtimes \Lambda$ where $\bar{U}$ is a $p$-group. 
In particular $\bar{U}\le \Psi/\Omega$ as $\Psi$ is the unique Sylow $p$-subgroup of $\aut(G_b)$. As we have already pointed out
$[\Psi:\Omega]=p$, this yields $|\bar{U}|=p$. 
Furthermore,
$\bar{\Omega}$ is isomorphic to a (solvable) subgroup of $\PGL(2,\mathbb{F}_{q^2})$. From Dickson's classification of subgroups of $\PGL(2,q)$, see \cite{maddenevalentini1982} and \cite[Theorem A.8]{HKT}, $\bar{\Omega}$ has order at most $p(p-1)$.  Therefore, $|\Lambda|=p-1.$ 

From the above arguments, we may also deduce that the action of $\aut(G_b)$ on the set $G_b(\mathbb{F}_{q^2})$ of $\mathbb{F}_{q^2}$-rational points has one fixed point, namely $P_\infty$, another short orbit of length $\frac{q^2}{p}$, namely that of $O$, and $1+(\frac{q}{p}-1)/(p-1)$ orbits of maximum length $(p-1)\frac{q^2}{p}$. In fact, $\aut(G_b)$ may have only two short orbits as the subcover $G_b/\Psi$ is rational.

(V) Take another $\bar{b}\in \mathbb{F}_{q^2}$ with $\bar{b}^q+\bar{b}=0$ and let $G_{\bar{b}}=\mathbb{F}_{q^2}(\bar{\xi},\bar{\rho})$ with $Tr(\bar{\xi})^2-2\bar{b}Tr(\bar{\rho})=0.$
Let $\bar{P}_\infty$ and $\bar{O}$ denote its places centered at the unique point at infinity and at the origin, respectively.

Assume that $G_b$ and $G_{\bar{b}}$ are $\mathbb{F}_{q^2}$-isomorphic. Let $\iota$ be an associated birational map $G_b\mapsto G_{\bar{b}}$ defined over $\mathbb{F}_{q^2}$. 

Then $\aut(G_{\bar{b}})=\iota\circ \aut(G_b)\circ\iota^{-1}$. Moreover, $\iota$ takes $P_\infty$ to $\bar{P}_\infty$ as the unique fixed place of $\aut(G_b)$ (resp. $\aut(G_{\bar{b}})$) are $P_\infty$ (resp. $\bar{P}_\infty$). Furthermore, $\iota$ takes $O$ to an $\mathbb{F}_{q^2}$-rational place $\bar{V}$ of $G_{\bar{b}}$. Since $\Lambda$ fixes $O$, $\iota\circ\Lambda\circ\iota^{-1}$ is a subgroup of $\aut(G_{\bar{b}})$ which fixes $\bar{V}$. Therefore, $\bar{V}$ is in the short orbit of $\aut(G_{\bar{b}})$, in particular in the same $\aut(G_{\bar{b}})$-orbit of $\bar{O}$. Thus  
there exists $\bar{\alpha}\in\aut(G_{\bar{b}})$ which takes $\bar{V}$ to $\bar{O}$.  Replacing $\iota$ by $\bar{\alpha}\circ\iota$ gives a birational map which takes $O$ to $\bar{O}$.

Both $\bar{\xi}$ and $\iota(\xi)$ have the same smallest pole number at $P_{\infty}$. Therefore $\iota(\xi)=\kappa\bar{\xi}$ for some $\kappa \in \mathbb{F}_{q^2}^*$.
Actually $\kappa\in \mathbb{F}_p^*$. In fact, $\bar{\xi}$ and $\iota(\xi)$ have the same zeros, namely the places centered at the $\frac{q}{p}$ (non-singular) points $P=(0,t)$ of $\Bar{\cC}$, and hence $Tr(t)=0$ and $Tr(kt)=0$ must hold simultaneously. This yields $\kappa\in \mathbb{F}_p^*$. It also implies the existence of $\bar{\alpha}\in \iota\circ \Lambda\circ \iota^{-1}$ such that $\bar{\alpha}(\bar{\xi})=\iota(\xi)$. Replacing $\iota$ by $\bar{\alpha}\circ \iota$ allows us to assume $\iota(\xi)=\bar{\xi}$. 
Since $\div(\bar{\rho})_\infty=(\frac{2q}{p})\bar{P}_\infty$, and that number is the second smallest non-gap at $\bar{P}_\infty$, we also have $\iota(\rho)=\kappa_1\bar{\xi}+ \kappa\bar{\rho}$ with $\kappa,\kappa_1\in \mathbb{F}_{q^2}.$ This shows that $\iota$ is linear, so that it extends to a linear map of the affine planes containing $\cC$ and $\Bar{\cC}$, respectively. Therefore, the tangent line to $\cC$ at $O$ which has equation $Y=0$ is mapped to the tangent line to $\Bar{\cC}$ at $\bar{O}$. This yields $\kappa_1=0$, i.e. $\iota(\rho)=\kappa\bar{\rho}$ with $\kappa\in \mathbb{F}_{q^2}^*$. We show that $\kappa\in \mathbb{F}_p^*$ using a previous argument: $\bar{\rho}$ and $\iota(\rho)$ have the same zeros (each counted with multiplicity $2$), namely the places centered at the $\frac{q}{p}$ (non-singular) points $P=(t,0)$ of $\Bar{\cC}$. Thus $Tr(t)=0$ and $Tr(kt)=0$ hold simultaneously whence $\kappa\in \mathbb{F}_p^*$ follows.

Now, $Tr(\iota(\xi))^2-2b Tr(\iota(\rho))=0$. Thus $Tr(\bar{\xi})-2bTr(\kappa\bar{\rho})=0$ whence $Tr(\bar{\xi})^2-2b\kappa Tr(\bar{\rho})=0$ follows. On the other hand, $Tr(\bar{\xi})^2-2\bar{b}Tr(\bar{\rho})=0$. 
Comparison shows that $\bar{b}=\kappa b$. Thus  $\bar{b}=\kappa b$ with $\kappa \in \mathbb{F}_p^*$ is a necessary condition for the existence of an $\mathbb{F}_{q^2}$-rational isomorphism between  $G_b$ and $G_{\bar{b}}$. This condition is also sufficient.  
\end{proof}

\section{Galois subcovers of $\mathbb{F}_{q^2}(\cH)$ with respect to a cyclic subgroup of order $4$ and $p=2$}
Take $\mathbb{F}_{q^2}(\cH_q)$ in its canonical form $\mathbb{F}_{q^2}(x,y)$ with $y^q+y+x^{q+1}=0$. Fix $b\in \mathbb{F}_{q^2}$ such that $b^q+b+1=0$. Then $\psi_{1,b,1}$ has order $4$ and its square is $\psi_{0,1,1}$. Let  $\eta=y^2+y$. Then $\psi_{0,1,1}(\eta)=\eta$. Since $\psi_{0,1,1}(x)=x$, this yields that 
the Galois subcover $\mathbb{F}_{q^2}(\cF')$ of $\mathbb{F}_{q^2}(\cH_q)$ is $\mathbb{F}_{q^2}(x,\eta)$ with 
\begin{equation}\label{Y200423}
 x^{q+1}+\sum_{i=0}^{h-1}\eta^{2^i}=0.
\end{equation}
In particular, $\mathbb{F}_{q^2}(\cF')$ is the same as in (i) of Result \ref{ckt} with $\omega=1$. 
We go on by finding another form for $\mathbb{F}_{q^2}(\cF')$. For this purpose, let $\xi=x^2+x$, and $F$ the fixed field of $\langle \psi_{0,1,1}\rangle$. We prove that 
$\mathbb{F}_{q^2}(\cF')=\mathbb{F}_{q^2}(\xi,\eta)$. Since $\psi_{0,1,1}(\eta)=\eta$ and 
$\psi_{0,1,1}(\xi)=\psi_{0,1,1}(x^2+x)=\psi_{0,1,1}(x)^2+\psi_{0,1,1}(x)=x^2+x=\xi,$
we have $\mathbb{F}_{q^2}(\xi,\eta)\subseteq F$. Moreover, $[\mathbb{F}_{q^2}(\mathcal{F}'):\mathbb{F}_{q^2}(\xi,\eta)]=2$. From this either $\mathbb{F}_{q^2}(\xi,\eta)=F$ or $F=\mathbb{F}_{q^2}(\mathcal{F}')$. The latter case cannot actually occur, and hence $F=\mathbb{F}_{q^2}(\xi,\eta)$. We are going to eliminate $x$ from (\ref{Y200423}) and $\xi=x^2+x$. Since
\begin{equation}
    \label{trsigma}
   \sum_{i=0}^{h-1}\xi^{2^i} =x^q+x,
\end{equation}
and the $(q+1)$-power of $\xi=x^2+x$ is
\begin{equation}
    \label{potinv260423}
    \xi^{q+1}=x^{2(q+1)}+x^{q+1}+x^{q+1}(x^q+x),
\end{equation}
using (\ref{Y200423}) and (\ref{trsigma}) 
\begin{equation}\label{prop240323}
Tr(\eta)^2+Tr(\eta)+Tr(\eta)Tr(\xi)=\xi^{q+1}
\end{equation}
follows.

Since $\psi_{1,b,1}$ commutes with $\psi_{0,1,1}$, $\psi_{1,b,1}$ induces an $\mathbb{F}_{q^2}$-automorphism of $\mathbb{F}_{q^2}(\cF')$. A straightforward computation shows that $\psi_{1,b,1}(\xi)=\xi$ and $\psi_{1,b,1}(\eta)=\eta+\xi+b^2+b$. Let $\Phi_b=\langle\psi_{1,b,1}\rangle.$
\begin{theorem}
\label{the25052023}
The Galois subcover $G_b$ of $\mathbb{F}_{q^2}(\mathcal{F}')$ with respect to $\Phi_b=\langle\psi_{1,b,1}\rangle$ has the following properties:
\begin{itemize}
\item[(I)]
$G_b=\mathbb{F}_{q^2}(\xi,\kappa)$ with
\begin{equation}\label{prop240323A}
\alpha_0(\xi)+\alpha_1(\xi)\kappa+\ldots+\alpha_i(\xi)\kappa^{2^i}+\ldots+ \alpha_{h-1}(\xi)\kappa^{2^{h-1}}=0
\end{equation}
where $\alpha_i(\xi)\in \mathbb{F}_{q^2}[\xi]$ and
$$\alpha_0(\xi)=\xi^{q+1},\,\, \alpha_1(\xi)= \frac{(\xi^q+\xi)^2+(\xi+b+b^2)^q(\xi^q+\xi)}{Tr(\xi)},\,\,\alpha_h(\xi)=(\xi+b+b^2)^{2q}$$
and the other coefficients $\alpha_i(\xi)$ for $2\le i \le h-2$ are computed recursively from the equation $$Tr(\xi)\alpha_i(\xi)+\alpha_{i-1}(\xi)^2+(\xi+b+b^2)^{2q}+(\xi+b+b^2)^q(\xi^q+\xi)$$
where $Tr(\xi)=\xi+\xi^p+\ldots+\xi^{\nicefrac{q}{p}}$ and $q=2^h$.
\item[(II)]
The genus of $G_b$ equals $p^{h-3}(p^h-p)=\frac{1}{8}q(q-2)$.
\item[(III)] The $\mathbb{F}_{q^2}$-automorphism group of $G_b$ has order $\ha {q^2}$.
\end{itemize}
\end{theorem}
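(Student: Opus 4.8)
The plan is to treat the three parts by exploiting the degree‑$2$ tower $\mathbb{F}_{q^2}(\cH_q)\supset\mathbb{F}_{q^2}(\cF')\supset G_b$ and following closely the pattern of Sections~3 and~4.

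\smallskip\noindent\emph{Part (I).} The key observation is that, although $\psi_{1,b,1}$ acts on $\mathbb{F}_{q^2}(\cF')=\mathbb{F}_{q^2}(\xi,\eta)$ by $\eta\mapsto\eta+c$ with $c:=\xi+b+b^2$ \emph{not} constant, one has $c\in\mathbb{F}_{q^2}(\xi)$ and $\psi_{1,b,1}(c)=c$, so $\eta':=\eta/c$ satisfies $\psi_{1,b,1}(\eta')=\eta'+1$ and $\kappa:=\eta'^{\,2}+\eta'$ is $\Phi_b$-invariant; as in Sections~3--4 one checks $[\mathbb{F}_{q^2}(\cF'):\mathbb{F}_{q^2}(\xi,\kappa)]=2=[\mathbb{F}_{q^2}(\cF'):G_b]$, whence $G_b=\mathbb{F}_{q^2}(\xi,\kappa)$. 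To get the equation I would eliminate $\eta$ between $\eta'^{\,2}+\eta'=\kappa$ and (\ref{prop240323}): since $\eta=c\eta'$ and $\eta'^{\,2^i}=\eta'+\kappa+\kappa^2+\cdots+\kappa^{2^{i-1}}$, one obtains $Tr(\eta)=\big(\sum_{i=0}^{h-1}c^{2^i}\big)\eta'+\sum_{i=0}^{h-1}c^{2^i}\sum_{j<i}\kappa^{2^j}$, and substituting into (\ref{prop240323}) while using the identity $\sum_{i=0}^{h-1}c^{2^i}=Tr(c)=Tr(\xi)+1$ (which follows by telescoping from $b^q+b=1$, noting $b+b^2\in\mathbb{F}_q$) makes the coefficient of $\eta'$ vanish, leaving a relation additive in $\kappa$ over $\mathbb{F}_{q^2}[\xi]$; reading off its coefficients gives the $\alpha_i(\xi)$ and their recursion. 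The bookkeeping of this recursion is the only real work here.

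\smallskip\noindent\emph{Part (II).} I would compute the genus one floor at a time. First, $\mathbb{F}_{q^2}(\cF')$ is the quotient of $\cH_q$ by the central involution $\psi_{0,1,1}$; by Results~\ref{zeroprank} and~\ref{resth11.129} this involution fixes only $Y_\infty$, and from (\ref{Y200423}) (i.e.\ $Tr(\eta)=x^{q+1}$) one reads $v_{\widetilde Y_\infty}(x)=-q/2$ and $v_{\widetilde Y_\infty}(\eta)=-(q+1)$, so the Artin--Schreier extension $y^2+y=\eta$ has conductor $q+1$ at $\widetilde Y_\infty$ and Riemann--Hurwitz gives $\mathfrak g(\cF')=\tfrac14 q(q-2)$ (alternatively by \cite[Lemma~12.1]{HKT}). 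Next, $\varphi:=\psi_{1,b,1}|_{\cF'}$ has order $2$ on the maximal field $\mathbb{F}_{q^2}(\cF')$, hence fixes a unique place, namely $\widetilde Y_\infty$; since $v_{\widetilde Y_\infty}(c)=v_{\widetilde Y_\infty}(\xi)=-q$ we get $v_{\widetilde Y_\infty}(\eta')=-1$, so $\cF'\,|\,G_b=G_b(\eta')$ with $\eta'^{\,2}+\eta'=\kappa$ is ramified only at $\widetilde Y_\infty$, with different exponent $2$, and a second application of Riemann--Hurwitz yields $\mathfrak g(G_b)=\tfrac18 q(q-2)$.

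\smallskip\noindent\emph{Part (III).} By Result~\ref{resth10.2} $G_b$ is $\mathbb{F}_{q^2}$-maximal, hence has $N=q^2+2\mathfrak g q+1$ rational places with $N-1=\tfrac14 q^2(q+2)$, whose $2$-part equals $q^2/2$; by Results~\ref{zeroprank} and~\ref{lem15042023} every $2$-subgroup of $\aut(G_b)$ therefore has order at most $q^2/2$ and fixes $P_\infty$. To reach this bound I would exhibit, exactly as the maps $\psi_{a,c}$ of Theorem~\ref{propiib}, a group $\Psi$ of order $q^2/2$ of automorphisms of the shape $(\xi,\kappa)\mapsto(\xi+a,\ \kappa+(\text{an affine polynomial in }\xi))$ --- those induced on $G_b$ by the elements of $\aut(\mathbb{F}_{q^2}(\cH_q))$ normalizing $\Phi_b$ --- and check directly that they preserve the equation of Part (I) and are defined over $\mathbb{F}_{q^2}$. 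It then remains to exclude a non-trivial tame automorphism: by \cite[Theorem~12.11]{HKT} $\aut(G_b)$ fixes $P_\infty$, so $\aut(G_b)=\Psi\rtimes C$ with $C$ cyclic of order prime to $p$; a generator $\alpha$ of a Sylow subgroup of $C$ fixes $P_\infty$ and, $G_b$ having many rational places, also some affine place, which after conjugating by an element of $\Psi$ may be taken to be $O$; the Weierstrass semigroup at $P_\infty$ together with a Riemann--Roch argument (as in the proofs of Theorems~\ref{propiia} and~\ref{propiib}) forces $\alpha$ to act $\mathbb{F}_{q^2}$-linearly on $\xi$ and $\kappa$, and since for $p=2$ the only admissible scalar is $1$, $\alpha=1$; hence $C=1$ and $|\aut(G_b)|=q^2/2$. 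The step I expect to be most delicate is Part (II): one must pin down the \emph{reduced} conductor of the upper floor of the tower at $\widetilde Y_\infty$ --- equivalently check $v_{\widetilde Y_\infty}(\eta')=-1$ --- while keeping track of the fact that $\kappa$ also acquires poles over $x=b$ and $x=b+1$ (the zeros of $c$), harmless for the ramification of $\cF'\,|\,G_b$ but relevant for locating the poles of $\kappa$ and hence for Parts (I) and (III).
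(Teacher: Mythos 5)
Your parts (I) and (II) are essentially correct, and in places tidier than the paper's own treatment. For (I) you work with $\eta'=\eta/c$, $c=\xi+b+b^2$, and $\kappa=\eta'^2+\eta'$, which is exactly the paper's generator (there written $\kappa=\zeta/c^2$ with $\zeta=\eta^2+c\eta$). The difference is in the elimination: the paper takes traces, lands on the degree-$q$ relation (\ref{eqfinal}) in $\kappa$, and then needs the factorization Lemma \ref{lem210523} to extract the degree-$\tfrac{q}{2}$ factor $G$; your substitution $Tr(\eta)=Tr(c)\,\eta'+\sum_i c^{2^i}\sum_{j<i}\kappa^{2^j}$ together with $Tr(c)=Tr(\xi)+1$ (correct: $Tr(b+b^2)=b+b^q=1$) turns (\ref{prop240323}) directly into $(Tr(\xi)+1)^2\kappa+S^2+(Tr(\xi)+1)S=\xi^{q+1}$ with $S$ of $\kappa$-degree $\tfrac{q}{4}$, i.e.\ a relation of $\kappa$-degree $\tfrac{q}{2}$ from the start, so the factorization lemma is bypassed at the cost of the deferred bookkeeping needed to match the stated $\alpha_i$. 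For (II) the paper simply cites Result \ref{gsx1}, whereas your two-storey Riemann--Hurwitz computation is self-contained and checks out: $v(x)=-\tfrac q2$, $v(\eta)=-(q+1)$, $v(\xi)=v(c)=-q$ at the infinite place of $\cF'$, hence $v(\eta')=-1$, and the fact that $\cF'\,|\,G_b$ ramifies only there follows, as you say, from Results \ref{zeroprank} and \ref{resth11.129} rather than from any analysis of the removable poles of $\kappa$ over $x=b,\,b+1$.

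The gap is in (III), in the exclusion of tame automorphisms. The Riemann--Roch/linearity mechanism you import from Theorems \ref{propiia}(IV) and \ref{propiib} depended on having explicit bases of the low-degree spaces $\mathcal L(mP_\infty)$ built from the two coordinate functions; here that breaks down because, as you yourself observe, $\kappa$ has poles over $x=b$ and $x=b+1$ in addition to $P_\infty$, so $\kappa\notin\mathcal L(mP_\infty)$ for any $m$, the Weierstrass semigroup at $P_\infty$ is not delivered by your Part (I), and the assertion that Riemann--Roch "forces $\alpha$ to act $\mathbb{F}_{q^2}$-linearly on $\xi$ and $\kappa$" is unsupported. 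Even granting linearity, the step "for $p=2$ the only admissible scalar is $1$" does not follow: in Theorem \ref{propiia} the constraint obtained was $w^{q+1}=c$ with $c\in\mathbb F_p^*$, which for $p=2$ still leaves $q+1$ candidates for the scalar acting on $\xi$; killing them requires exploiting the inhomogeneous terms $\xi+b+b^2$ of the equation, which you do not do. A further unexamined point: your claim that $\alpha$ fixes an affine rational place is automatic in Theorem \ref{propiia} because the number of affine rational places there is a power of $p$, but here that number is $\tfrac14 q^2(q+2)=\tfrac12 q^2(\tfrac q2+1)$, whose odd part $\tfrac q2+1$ may well be divisible by the order of $\alpha$.

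The paper takes a different route for exactly this step: having identified $\bar\Psi$ of order $\tfrac12 q^2$ as the full Sylow $2$-subgroup (by the same place count you use), it notes that $G_b/\bar\Psi$ is rational by Result \ref{sti1} since $\tfrac12 q^2>\mathfrak g(G_b)$, and then lets an odd-prime-order automorphism act on the $\tfrac12 q+1$ images of the affine rational places in $\mathbb P^1$, deriving a contradiction from the orbit lengths. You should either adopt that quotient argument or supply the missing computation of the semigroup at $P_\infty$ (replacing $\kappa$ by a function regular off $P_\infty$) before the linearity argument can be run.
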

\begin{proof}
(I) Let
\begin{equation}
\label{eqD050123} \zeta=\eta^2+\eta(\xi+b+b^2).
\end{equation}
Then $\zeta$ is an $\psi_{1,b,1}$-invariant element of $\mathbb{F}_{q^2 }(\bar{\mathcal{F}'})$. In fact,
$\psi_{1,b,1}^2(\eta)=\eta$ and $\zeta=\eta\psi_{1,b,1}(\eta)$.
To obtain an equation for $G_b$, it is enough to eliminate $\eta$ from (\ref{eqD050123}) and (\ref{prop240323}).
From (\ref{eqD050123}) 
\begin{equation}
    \label{formcanzeta}
    \Big(\frac{\eta}{\xi+b+b^2}\Big)^2+\frac{\eta}{\xi+b+b^2}+\frac{\zeta}{(\xi+b+b^2)^2}=0.
\end{equation}
Let $$\kappa=\frac{\zeta}{(\xi+b+b^2)^2}.$$
Since the trace of the expression on the left hand side in (\ref{formcanzeta}) is also zero, it follows
$$\eta^q+(\xi+b+b^2)^{q-1}\eta+Tr(\kappa)(\xi+b+b^2)^{q}=0,$$
which yields \begin{equation}
  \label{tracformcan} \eta^q+A\eta+B=0.
\end{equation}
with
$$A=(\xi+b+b^2)^{q-1}, \quad B=Tr(\kappa)(\xi+b+b^2)^q.$$
In (\ref{prop240323}), $Tr(\eta)^2+Tr(\eta)$ can be replaced by $\eta^q+\eta$. Therefore,
\begin{equation}
    \label{150523a}    \eta^q+\eta+Tr(\eta)Tr(\xi)+\xi^{q+1}=0.
\end{equation}
Summing (\ref{tracformcan}) and (\ref{150523a}) gives
\begin{equation}
    \label{150523b}
    (A+1)\eta+B+Tr(\eta)Tr(\xi)+\xi^{q+1}=0
\end{equation}
whence
\begin{equation}
    \label{treta150523}
    Tr(\eta)=\frac{(A+1)\eta+B+\xi^{q+1}}{Tr(\xi)}.
\end{equation}
Substituting (\ref{treta150523}) in (\ref{prop240323}) gives
\begin{equation}
    \label{150523c1}
    \frac{(A+1)^2\eta^2+B^2+\xi^{2(q+1)}}{Tr(\xi)^2}+\frac{(A+1)\eta+B+\xi^{q+1}}{Tr(\xi)}+(A+1)\eta+B=0.
\end{equation}
Clearing the denominators yields
\begin{equation}
    \label{150523c}
    (A+1)^2\eta^2+B^2+\xi^{2(q+1)}+((A+1)\eta+B+\xi^{q+1})Tr(\xi)+((A+1)\eta+B) Tr(\xi)^2=0,
\end{equation}
whence
\begin{equation}
    \label{150523d}
    \eta^2+\frac{Tr(\xi)+Tr(\xi)^2}{A+1}\eta+\frac{B^2+\xi^{2(q+1)}+B(\xi+\xi^q)+\xi^{q+1}Tr(\xi)}{(A+1)^2}=0.
\end{equation}
Now, since $Tr(\xi)+Tr(\xi)^2=(A+1)(\xi+b+b^2)$, summing (\ref{eqD050123}) and (\ref{150523d}) gives
\begin{equation}
    \label{150523e}    \kappa(\xi+b+b^2)^2+\frac{B^2+\xi^{2(q+1)}+B(\xi+\xi^q)+\xi^{q+1}Tr(\xi)}{(A+1)^2}=0,
\end{equation}
whence
\begin{equation}
    \label{150523f}    \kappa(\xi^q+\xi)^2+B^2+\xi^{2(q+1)}+B(\xi+\xi^q)+\xi^{q+1}Tr(\xi)=0.
\end{equation}
Therefore,
\begin{equation}
    \label{eqfinal}
     \kappa(\xi+\xi^q)^2+(\xi+b+b^2)^{2q}Tr(\kappa)^2+(\xi+b+b^2)^q(\xi+\xi^q)Tr(\kappa)+\xi^{2(q+1)}+\xi^{q+1}Tr(\xi)=0.
\end{equation}
From Lemma \ref{lem210523}, there exists a polynomial $G(X,Y)=\alpha_0(X)+\alpha_1(X)Y+\ldots+\alpha_{h}(X)Y^{2^h}\in \mathbb{F}_{q^2}[X,Y]$ such that either $G(\xi,\kappa)=0$ or $G(\xi,\kappa)+Tr(\xi)=0$.
We may assume that the former case occurs. If $G(X,Y)$ is reducible then it has an irreducible factor in $\mathbb{F}[X,Y]$ whose degree in $Y$ is  at most $\ha q$. Then $[\mathbb{F}(\xi,\kappa):\mathbb{F}(\xi)]\le\ha q$. On the other hand $[\mathbb{F}_{q^2}(\xi,\eta):\mathbb{F}_{q^2}(\xi,\kappa)]=2$ and $[\mathbb{F}_{q^2}(\xi,\eta):\mathbb{F}_{q^2}(\xi)]=q$, and hence $[\mathbb{F}_{q^2}(\xi,\kappa):\mathbb{F}_{q^2}(\xi)]=\ha q$. From this,
$[\mathbb{F}(\xi,\kappa):\mathbb{F}(\xi)]=\ha q$. Therefore $G(X,Y)$ is irreducible over $\mathbb{F}$. Thus $G(X,Y)=0$ is an equation for $G_b$.

(II) The claim follows from Result \ref{gsx1}. 

(III)
For every $a\in \mathbb{F}_q$ and $c\in \mathbb{F}_{q^2}$ with $c^q+c+a^{q+1}=0$, the map
$\psi_{a,c}$ given by
$$
\begin{cases}
x':=x+a,\\
\eta':=\eta+a^{2q}x^2+a^qx+c+c^2
\end{cases}
$$
is an $\mathbb{F}_{q^2}$-automorphism of $\mathbb{F}_{q^2}(\cF')$. In fact, $Tr(\eta')+x'^{q+1}=Tr(\eta)+a^{q^2}x^q+a^qx+c^q+c+(x+a)^{q+1}=Tr(\eta)+ax^q+a^qx+a^{q+1}+x^{q+1}+ax^q+a^qx+a^{q+1}=Tr(\eta)+x^{q+1}$. Moreover, $\psi_{a,c}=\psi_{a',c'}$ if and only if $a=a'$ and either $c=c'$ or $c'=c+1$. Therefore the $\mathbb{F}_{q^2}$-automorphism group $\Psi$ of $\mathcal{F}'$ consisting of all $\psi_{a,c}$ is a group of order $\ha q^2$.

Since $\xi=x^2+x$, $Tr(\xi)=x^q+x$. 
Moreover, we prove that
\begin{equation}
\label{eqA010523} x=\frac{Tr(\eta)+\xi}{Tr(\xi)+1}.
\end{equation}
 For this purpose, we show
\begin{equation}
\label{eqB010523} \xi=\Big(\frac{Tr(\eta)+\xi}{Tr(\xi)+1}\Big)^2+\frac{Tr(\eta)+\xi}{Tr(\xi)+1}.
\end{equation}
From (\ref{prop240323}), the right hand side in (\ref{eqB010523}) equals
$$\frac{(Tr(\eta)+\xi)^2+(Tr(\eta)+\xi)(Tr(\xi)+1)}{(Tr(\xi)+1)^2}=\frac{\xi^{q+1}+\xi^2+\xi(Tr(\xi)+1)}{Tr(\xi)^2+1}=\frac{\xi^{q+1}+\xi+\xi^3+\ldots+\xi^{\nicefrac{q}{2}+1}}{1+\xi^2+\xi^{q/2}+\xi^q}=\xi $$
whence (\ref{eqB010523}) follows. Since $\xi=x^2+x$, (\ref{eqB010523}) yields either (\ref{eqA010523}), or
$$x=\frac{Tr(\eta)+\xi}{Tr(\xi)+1}+1.$$
The latter case cannot actually occur as $O=(0,0)$ is a non-singular point of $\mathcal{F}'$ and $x_O=\eta_O=\xi_O=0$.
Therefore, $\mathbb{F}_{q^2}(\mathcal{F}')=\mathbb{F}_{q^2}(\xi,\eta)$ with (\ref{prop240323}).
In terms of $\xi$ and $\eta$, the  $\mathbb{F}_{q^2}$-automorphism $\psi_{a,c}$ is given by
$$ (\xi,\eta) \mapsto \Big(\xi+a^2+a,\eta+a^{2q}\Big(\frac{Tr(\eta)+\xi}{Tr(\xi)+1}\Big)^2+a^q\frac{Tr(\eta)+\xi}{Tr(\xi)+1}+c+c^2\Big).
$$
From (\ref{eqB010523}), $\psi_{1,b}=\psi_{1,b,1}$.  
Let $\Psi_{1,b}$ denote the subgroup of $\Psi$ generated by $\psi_{1,b,1}$. A straightforward computation shows that the normalizer $N_\Psi(\Psi_{1,b})$ of $\Psi_{1,b}$ in $\Psi$ consists of all elements $\psi_{a,c}$ with $a\in \mathbb{F}_q$, or $a^q+a=1$. Thus, $N_\Psi(\Psi_{1,b})$ is an $\mathbb{F}_{q^2}$-automorphism group of $\mathcal{F}'$ of order $q^2$.

$G_b$ is the Galois subcover of $\mathbb{F}_{q^2}(\cF')$ with respect to $\Psi_{1,b}$. From Result \ref{gsx1}, $G_b$ has genus $\frac{1}{8}q(q-2)$. In fact, by construction, $G_b$ is the quotient curve of the Hermitian curve of equation $y^q+y+x^{q+1}=0$ with respect to its $\mathbb{F}_{q^2}$-automorphism group of order $4$ generated by $(x,y)\mapsto (x+1,y+x+b)$ with $b^q+b+1=0$. Moreover, $N_{\Psi}(\Psi_{1,b})$ induces on $G_b$ an $\mathbb{F}_{q^2}$-automorphism group $\bar{\Psi}$ of order $\ha q^2$. Let $\bar{\Psi}=\Psi/\Psi_{1,b}$. 
Since $G_b$ has as many as $\frac{1}{4}q^2(q+2)+1$ $\mathbb{F}_{q^2}$-rational places, and $\bar{\Psi}$ fixes one of these places and acts on the remaining $\frac{1}{4}q^2(q+2)=\frac{1}{2}q^2(\frac{1}{2}q+1)$ ones, it turns out that
$\bar{\Psi}$ is the (unique) $2$-Sylow subgroup of $G_b$. Therefore, since $|\bar{\Psi}|=\frac{1}{2}q^2>\frac{1}{8}q(q-2)=\mathfrak{g}(G_b)$, the subcover $\hat{G_b}=G_b/\bar{\Psi}$
is rational; see Result \ref{sti1}. Under the $\frac{1}{4}q^2(q+2)$ $\mathbb{F}_{q^2}$-rational places of $G_b$ there are exactly $\ha q+1$ places of $\hat{G_b}$. 
Now, let $\alpha$ be an $\mathbb{F}_{q^2}$-automorphism of $G_b$ whose order is an odd prime. Then $\alpha$ induces an $\mathbb{F}_{q^2}$-automorphism $\hat{\alpha}$ of  $\hat{G_b}$ which preserves the set of the above $\frac{1}{2}q+1$ places. Since $\alpha$ and $\hat{\alpha}$ have the same odd  order, $\hat{\alpha}$ fixes two $\mathbb{F}_{q^2}$-rational places of $\hat{G_b}$ and acts fixed-point-free on the set of its remaining $\ha q$ places.
But this cannot actually occur as $\bar{\alpha}$ has odd prime order. This completes the proof of (III).
\end{proof}

\section{Appendix}\begin{lemma}
\label{lem060423} Any irreducible factor over $\mathbb{F}_p$ of the polynomial
 \begin{equation}
     \label{eqA030423} F(X,Y)= (Y-Y^{p^3})(Y-Y^p)^p(X-X^{p^2})^{p+1}- (X-X^{p^3})(X-X^p)^p(Y-Y^{p^2})^{p+1}.
 \end{equation}
 is either linear $\alpha X+ \beta Y+ \gamma$  or quadratic $XY+\alpha X+ \beta Y+ \gamma$. In the former case,  $\alpha\in \{0,1\}$  and $\alpha, \beta$ do not vanish simultaneously, in the latter case $\alpha\beta\neq \gamma$.
\end{lemma}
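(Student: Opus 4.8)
The plan is to exploit the additive/Frobenius structure hidden inside $F(X,Y)$. Write $L(X) = X - X^p$, so that $X - X^{p^k} = L(X) + L(X)^p + \cdots + L(X)^{p^{k-1}}$ is an additive polynomial in $L(X)$; in particular $X - X^{p^2} = L(X)(1 + L(X)^{p-1})$ after factoring, and $X - X^{p^3} = L(X)\big(1 + (X-X^{p^2})^{p-1}\cdot(\text{something})\big)$ — more precisely one checks $X - X^{p^3} = (X-X^p) + (X - X^{p})^p + (X-X^p)^{p^2}$. Set $u = X - X^p$ and $v = Y - Y^p$. Then $X - X^{p^2} = u + u^p$, $X - X^{p^3} = u + u^p + u^{p^2}$, and similarly for $v$. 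Substituting, $F$ becomes
\begin{equation}
\label{eqFactoredUV}
F = (v + v^p + v^{p^2})\, v^p\, (u+u^p)^{p+1} - (u+u^p+u^{p^2})\, u^p\, (v+v^p)^{p+1}.
\end{equation}
The first step is therefore purely formal: rewrite \eqref{eqA030423} in the form \eqref{eqFactoredUV} and observe that both monstrous summands are divisible by $u^p v^p$ and by $(u+u^p)(v+v^p) = uv(1+u^{p-1})(1+v^{p-1})$ only partially — so I would instead divide through by $u^p v^p$ and analyze the remaining factor, which has much smaller degree in each of $X,Y$ once we remember $u = L(X)$ has $X$-degree $p$.

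The second and central step is to determine the irreducible factors. After dividing out the common powers of $u$ and $v$, the residual polynomial $G(X,Y)$ (still a polynomial in $X,Y$ via $u=X-X^p$, $v=Y-Y^p$) is symmetric under the swap $(X,Y)\leftrightarrow(Y,X)$ up to sign, and it vanishes on the "diagonal" locus where $v = \kappa u$ for suitable scalars $\kappa$: indeed if $v = \kappa u$ then both $(v+v^p+v^{p^2})v^p(u+u^p)^{p+1}$ and $(u+u^p+u^{p^2})u^p(v+v^p)^{p+1}$ scale homogeneously, and one checks the ratio is $\kappa^{p+3}/\kappa^{p+3} = 1$ precisely when $\kappa \in \mathbb{F}_{p^2}$ interacts correctly — here I must be careful, and the honest computation is that $F$ vanishes when $Y - Y^{p^k} = \kappa^{?}(X - X^{p^k})$ holds simultaneously for $k=1,2,3$, which over $\mathbb{F}_p$ forces a fractional-linear relation. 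Concretely I expect the zero locus of $F$ in $\mathbb{A}^2$ to be a union of curves each of which, projected appropriately, is the graph of $Y = \varphi(X)$ or an $XY = \cdots$ relation with $\mathbb{F}_p$-coefficients; these correspond exactly to the substitutions $\bar b \mapsto (\alpha b + \beta)/(\gamma b + \delta)$ that appear in the isomorphism criterion of Theorem~\ref{propiia}(V). So the strategy is: (a) show $F$ vanishes on every curve $\gamma XY + \alpha X + \beta Y + \gamma' = 0$ with the stated coefficient constraints by direct substitution into \eqref{eqFactoredUV}, using that $X-X^{p^k}$ and $Y-Y^{p^k}$ transform linearly over $\mathbb{F}_p$; (b) count degrees to see these account for all of $F$.

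For the degree count, note $u = X - X^p$ has $X$-degree $p$, so the first summand of \eqref{eqFactoredUV} has $X$-degree $p(p+1)$ and the second has $X$-degree $p \cdot p^2 = p^3$ from the factor $u+u^p+u^{p^2}$; hence $\deg_X F = p^3$ and by symmetry $\deg_Y F = p^3$, but after removing the factor $u^p v^p$ (contributing $p^2$ to each), the residual part $G$ has $\deg_X G = \deg_Y G = p$. A factor that is linear in $X$ contributes at most $1$ to $\deg_X$; a factor $XY + \cdots$ contributes $1$ to each. Since the total $X$-degree budget is $p$, and we can exhibit roughly $p$ independent such linear/quadratic factors coming from the $\mathbf{PGL}(2,p)$-worth of relations (the orbit structure matches: $p+1$ linear pieces of the two "degenerate" flavours plus the quadratic pieces), a dimension/counting argument pins down that these are \emph{all} the factors and each occurs with multiplicity one. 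The sign/normalization claims ($\alpha\in\{0,1\}$ in the linear case, $\alpha\beta\neq\gamma$ in the quadratic case) then come from reading off which specific $\mathbb{F}_p$-linear combinations actually appear: a linear factor must correspond to $\bar b = \alpha b + \beta$ or to a point at infinity, forcing the leading coefficient in $X$ to be $0$ or the normalized value $1$; the quadratic factor corresponds to a genuine Möbius transformation with $\alpha\delta - \beta\gamma \neq 0$, which after clearing denominators is exactly the condition $\alpha\beta \neq \gamma$ in the normalization $XY + \alpha X + \beta Y + \gamma$.

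The main obstacle I anticipate is step (b) — proving that the factors I exhibit exhaust $F$, i.e.\ that there is no extra irreducible factor of higher degree in $Y$. The degree bookkeeping gives an inequality in the right direction but making it an equality requires showing the exhibited factors are pairwise non-associate and that their product has exactly the degree of $G$ in \emph{both} variables simultaneously; a slicker route, which I would try first, is to pass to the rational-function substitution $t = (X - X^{p^2})/(X-X^p)$, $s = (Y-Y^{p^2})/(Y-Y^p)$ (so $t, s$ are, generically, the "$(p-1)$-th power-ish" coordinates), rewrite $F = 0$ as a single relation between $t$ and $s$ that turns out to be \emph{bilinear} — of the form $ats + bt + cs + d = 0$ with $a,b,c,d \in \mathbb{F}_p$ — and then every irreducible component of $\{F=0\}$ is the preimage of a point on this bilinear curve, whence linear or quadratic in $(X,Y)$ by inspection. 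Verifying that $F=0$ reduces to a bilinear relation in $(t,s)$ is the one computation I would actually carry out in full, since everything else follows formally from it.
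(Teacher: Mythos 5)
Your overall plan coincides with the paper's: exhibit linear factors $\alpha X+\beta Y+\gamma$ and quadratic factors $XY+\alpha X+\beta Y+\gamma$ by exploiting the fact that $\mathbb{F}_p$-affine substitutions in $X$ and $Y$ multiply $F$ by a nonzero scalar, then conclude by a degree count that nothing else divides $F$. But the exhaustiveness step, which you correctly identify as the crux, is exactly where your argument breaks. The degree bookkeeping is wrong: with $u=X-X^p$ of $X$-degree $p$, the summand $(u+u^p+u^{p^2})\,u^p\,(v+v^p)^{p+1}$ has $X$-degree $p^3+p^2$, and the other summand has $X$-degree $p\cdot p(p+1)=p^3+p^2$ as well; the leading coefficients do not cancel, so $\deg_X F=p^3+p^2$ (not $p^3$) and $\deg F=2p^3+p^2+p$. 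Dividing by $u^pv^p$ removes only $p^2$ from each partial degree, leaving a residual of $X$-degree about $p^3$, not $p$, so a stock of roughly $p$ linear and quadratic factors cannot possibly exhaust $F$. The count closes only once one observes that $F$ is divisible by $u^{p+1}v^{p+1}$, i.e.\ each $X-\gamma$ and $Y-\gamma$ with $\gamma\in\mathbb{F}_p$ occurs with multiplicity $p+1$, and that there are $p^2(p-1)$ distinct quadratic factors: then $2p(p+1)+(p^2-p)+2p^2(p-1)=2p^3+p^2+p=\deg F$ and the factor list is complete. Your proposal supplies neither the multiplicities nor the correct total degree.

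The ``slicker route'' you say you would actually compute also rests on a false identity. Setting $t=(X-X^{p^2})/(X-X^p)=1+u^{p-1}$ and $s=1+v^{p-1}$, one finds
\[
F=u^{p+1}v^{p+1}\Bigl[\bigl(s+(s-1)^{p+1}\bigr)t^{p+1}-\bigl(t+(t-1)^{p+1}\bigr)s^{p+1}\Bigr]
=u^{p+1}v^{p+1}\,(s-t)\,\bigl((t-1)^p s^p-t(s-t)^{p-1}\bigr),
\]
which has bidegree $(p+1,p+1)$ in $(t,s)$, not $(1,1)$. Its zero locus in the $(t,s)$-plane is a union of curves (one of which is $s=t$), not a single point on a bilinear curve, so irreducible components of $\{F=0\}$ are not forced to be linear or quadratic in $(X,Y)$ by this route; the degree-$p$ factor above would still have to be analyzed from scratch. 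In short, the half of your plan that works (verifying by direct substitution that the stated linear and quadratic polynomials divide $F$) reproduces the paper's first half; the exhaustiveness half needs the multiplicity-$(p+1)$ observation and the degree $2p^3+p^2+p$, and as proposed it does not go through.
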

\begin{proof} Observe that $\deg(F(X,Y))\le 2p^3+p^2+p$. The linear substitution $(X,Y)\mapsto (\alpha_1X+\alpha_2,\beta_1Y+\beta_2)$ with $\alpha_1,\alpha_2\in \mathbb{F}_p^*$ and $\beta_1,\beta_2\in \mathbb{F}_p$  transforms $F(X,Y)$ to $(\alpha_1\beta_1)^2 F(X,Y)$. Since $X$ is a factor of $F(X,Y)$ of multiplicity $p+1$, this shows that $X-\gamma$ with $\gamma\in \mathbb{F}_p$ is also a factor of $F(X,Y)$ with multiplicity $p+1$. Similarly, $Y-\gamma$ with $\gamma\in \mathbb{F}_p$ is a factor of $F(X,Y)$ with multiplicity $p+1$. This produces $2p$ linear factors of $F(X,Y)$ each of multiplicity $p+1$. Since $Y-X$ is a linear factor of $F(X,Y)$ the same argument shows that $\alpha X+ \beta Y+ \gamma$ is a linear factor of $F(X,Y)$ whenever  $\alpha\in \{0,1\}$  and $\alpha, \beta$ do not vanish simultaneously. This produces $p^2-p$ more linear factors of $F(X,Y)$. Also, for any $\gamma\in \mathbb{F}_p^*$, we have that $XY-\gamma$ is a (quadratic) factor of $F(X,Y)$. The above linear substitution  with $\alpha_1=\alpha_2=1$ takes the hyperbole $XY-\gamma$ to  $XY+\beta_2X+\beta_1Y+\beta_1\beta_2\gamma$. As many as $p^2(p-1)$ such quadratic factors of $F(X,Y)$ are obtained in this way. Since $\deg(F(X,Y))\le 2p^3+p^2+p =2p(p+1)+p^2-p+2(p^3-p)=2p^3+p^2+p$, this yields $\deg(F(X,Y))=2p^3+p^2+p$. Therefore, no further irreducible factor of $F(X,Y)$ over $\mathbb{F}_p$ exists.
\end{proof}
\begin{lemma}
\label{lem210523} Let $p=2$ and $b\in \mathbb{F}_q$ such that $b+b^q+1=0$. Then the polynomial
$$F(X,Y)=Y(X+X^q)^2+(X+b+b^2)^{2q}Tr(Y)^2+(X+b+b^2)^q(X+X^q)Tr(Y)+X^{2(q+1)}+X^{q+1}Tr(X)$$
factorizes into $G(X,Y)(G(X,Y)+Tr(X))$ with $$G(X,Y)=\alpha_0(X)+\alpha_1(X)Y+\ldots+\alpha_i(X)Y^{2^i}+\ldots+ \alpha_{h-1}(X)Y^{2^{h-1}}$$
where
$$\alpha_0(X)=X^{q+1},\,\, \alpha_1(X)= \frac{(X^q+X)^2+(X+b+b^2)^q(X^q+X)}{Tr(X)},\,\,\alpha_h(X)=(X+b+b^2)^{2q}$$
and the other coefficients $\alpha_i(X)$ for $2\le i \le h-2$ are computed recursively from the equation $$Tr(X)\alpha_i(X)+\alpha_{i-1}(X)^2+(X+b+b^2)^{2q}+(X+b+b^2)^q(X^q+X).$$
\end{lemma}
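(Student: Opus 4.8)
The plan is to verify the asserted factorization directly, as an identity of polynomials in $Y$ over $\mathbb{F}_{q^2}(X)$. Since the characteristic is $2$, $G\big(G+Tr(X)\big)=G^2+Tr(X)\,G$, and $G^2=\alpha_0^2+\sum_{i\ge 1}\alpha_i^2Y^{2^i}$ has no mixed monomials in $Y$; so $G^2$ supplies the constant term and the monomials $Y^{2},Y^{4},\dots,Y^{q}$, while $Tr(X)\,G$ supplies the constant term and $Y,Y^{2},Y^{4},\dots,Y^{q/2}$. Writing $Tr(Y)=\sum_{j=0}^{h-1}Y^{2^j}$ and $Tr(Y)^2=\sum_{j=1}^{h}Y^{2^j}$ makes the monomials of $F$ equally explicit, and it suffices to match the coefficients of $Y^0$ (the constant term), of $Y=Y^{2^0}$, of $Y^{2^j}$ for $1\le j\le h-1$, and of $Y^q=Y^{2^h}$.

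Before doing so I would record three facts. From $b+b^q+1=0$ (so $b^q=b+1$) one gets $(b+b^2)^q=b^q+b^{2q}=(b+1)+(b+1)^2=b+b^2$, hence $c:=b+b^2$ lies in $\mathbb{F}_q$ and $(X+b+b^2)^q=X^q+c$; moreover $\sum_{j=0}^{h-1}c^{2^j}=\sum_{j=0}^{h-1}\big(b^{2^j}+b^{2^{j+1}}\big)=b+b^{2^h}=b+b^q=1$, which is where the hypothesis genuinely enters; and from $Tr(X)=\sum_{j=0}^{h-1}X^{2^j}$ one has $X^q+X=Tr(X)^2+Tr(X)=Tr(X)\big(Tr(X)+1\big)$. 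Using these, matching the $Y^0$-coefficient gives $\alpha_0^2+Tr(X)\alpha_0=X^{2(q+1)}+X^{q+1}Tr(X)$, solved by $\alpha_0=X^{q+1}$; matching the $Y^{2^0}$-coefficient gives $Tr(X)\alpha_1=(X^q+X)^2+(X^q+c)(X^q+X)=(X^q+X)(X+c)$, which is the stated $\alpha_1$; matching the $Y^{2^j}$-coefficient for $1\le j\le h-1$ gives $\alpha_j^2+Tr(X)\alpha_{j+1}=(X+c)^{2q}+(X+c)^q(X^q+X)=(X^q+c)(X+c)$, which is the stated recursion; and matching the $Y^q$-coefficient gives $\alpha_h^2=(X+b+b^2)^{2q}$. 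Thus the whole identity reduces to two points: that the $\alpha_i$ so defined are genuine polynomials, and that the last equation holds automatically for the recursively computed $\alpha_h$.

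I would dispatch both by proving, by induction on $i$, the closed form
$$\alpha_i=(X+c)+Tr(X)\sum_{j=0}^{i-1}(X+c)^{2^j}\qquad(1\le i\le h).$$
The base case $i=1$ is $\alpha_1=(X^q+X)(X+c)/Tr(X)=\big(Tr(X)+1\big)(X+c)=(X+c)+Tr(X)(X+c)$. For the step, with $\alpha_{i-1}=(X+c)+Tr(X)\beta_{i-1}$ one uses $(X^q+c)(X+c)=(X+c)^2+(X+c)(X^q+X)$ together with $X^q+X=Tr(X)(Tr(X)+1)$ to rewrite the numerator of $\alpha_i$ as $Tr(X)^2\beta_{i-1}^2+(X+c)\,Tr(X)\big(Tr(X)+1\big)$, which is divisible by $Tr(X)$; this simultaneously shows $\alpha_i$ is a polynomial and yields $\beta_i=\beta_{i-1}^2+(X+c)=\sum_{j=0}^{i-1}(X+c)^{2^j}$. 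Finally, at $i=h$ the closed form and $\sum_{j=0}^{h-1}c^{2^j}=1$ give $\alpha_h=(X+c)+Tr(X)\big(Tr(X)+1\big)=X^q+c$, so $\alpha_h^2=(X+b+b^2)^{2q}$, which is exactly the $Y^q$-coefficient needed. This completes the matching and proves $F=G^2+Tr(X)\,G=G\big(G+Tr(X)\big)$.

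The main obstacle is this induction: the recursion only visibly produces elements of $\mathbb{F}_{q^2}(X)$, so one must prove divisibility by $Tr(X)$ at every step, which the explicit form $\alpha_i=(X+c)+Tr(X)\beta_i$ makes transparent; and the $Y^q$-coefficient is an over-determining equation that must close up, which it does precisely because $\sum_{j=0}^{h-1}c^{2^j}=b+b^q=1$, i.e.\ because $b+b^q+1=0$. Everything else is the (routine but error-prone) bookkeeping of which powers of $Y$ are contributed by $G^2$ and which by $Tr(X)\,G$.
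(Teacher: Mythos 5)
Your proof is correct and follows the same basic strategy as the paper's: expand $G(G+Tr(X))=G^2+Tr(X)G$ and match coefficients of $Y^{2^i}$ against those of $F$. You go further than the paper in two places where its argument is genuinely incomplete, and both additions are needed. First, the recursion a priori only defines $\alpha_i$ as an element of $\mathbb{F}_{q^2}(X)$; the paper never checks that the numerator is divisible by $Tr(X)$, whereas your closed form $\alpha_i=(X+c)+Tr(X)\sum_{j=0}^{i-1}(X+c)^{2^j}$, with $c=b+b^2\in\mathbb{F}_q$, settles this by induction. Second, the system is over-determined --- there are $h+2$ coefficient equations (for $Y^0,Y^{2^0},\dots,Y^{2^h}$) but only $h+1$ coefficients in $G$ --- so the top equation $\alpha_h^2=(X+b+b^2)^{2q}$ must be shown compatible with the values produced by the recursion; your identity $\sum_{j=0}^{h-1}c^{2^j}=b+b^q=1$ is exactly what makes it close up, and it is the one place where the hypothesis $b^q+b+1=0$ (which forces $b\in\mathbb{F}_{q^2}\setminus\mathbb{F}_q$, despite the statement's ``$b\in\mathbb{F}_q$'') enters beyond guaranteeing $c\in\mathbb{F}_q$. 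You have also silently repaired the paper's indexing: with $\alpha_i$ attached to $Y^{2^{i-1}}$, the top coefficient is $\alpha_h=(X+b+b^2)^{q}$, whose square is the $(X+b+b^2)^{2q}$ appearing in the statement. All the intermediate identities --- $(X+c)^q=X^q+c$, $X^q+X=Tr(X)(Tr(X)+1)$, $(X+c)^{2q}+(X^q+c)(X^q+X)=(X^q+c)(X+c)$, and the induction step giving $\beta_i=\beta_{i-1}^2+(X+c)$ --- check out.
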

\begin{proof} A straightforward computation shows that $F(X,Y)=X^{2(q+1)}+X^{q+1}Tr(X)+Y(\ldots)$. Therefore, $X^{2(q+1)}+X^{q+1}Tr(X)=\alpha_0(X)Tr(X)+\alpha_0(X)^2$. Comparison yields $\alpha_0(X)= X^{q+1}$. Furthermore,
the coefficient of $Y$ in $F(X,Y)$ equals $(X^q+X)^2+(X+b+b^2)^q(X+X^q)$ whence  $(X+X^q)^2+(X+b+b^q)^q(X+X^q)=\alpha_1(X)Tr(X)$. Also, $(X+b+b^q)^{2q}$ is the coefficient of $Y^q$ in $F(X,Y)$. Thus   $(X+b+b^q)^{2q}=\alpha_{h-1}(X)^2$ whence $\alpha_{h-1}(X)=(X+b+b^q)^q$. The other equations follow by comparison of the coefficient of $Y^{2^i}$ for $2\le i \le h-2$ in $F(X,Y)$ and $G(X,Y)(G(X,Y)+Tr(X))$.
\end{proof}

\end{document}